\numberwithin{equation}{section}
\newtheorem{theorem}{Theorem}[section]
\newtheorem{lemma}[theorem]{Lemma}
\theoremstyle{definition}
\newtheorem{definition}[theorem]{Definition}
\newtheorem{example}[theorem]{Example}
\newtheorem{corollary}[theorem]{Corollary}
\theoremstyle{remark}
\numberwithin{equation}{section}
\newcommand{\Zb}{\mathbb{Z}}
\newcommand{\R}{\mathbb{R}}
\newcommand{\N}{\mathbb{N}}
\newcommand{\Lp}{L^p(I, \rho)}
\newcommand{\X}{\mathbf{x}}
\newcommand{\LP}{L^p(I^d, \rho)}
\begin{document}
\title[Neural Network operators with respect to arbitrary measures]{Approximation by Neural Network operators in\\ $L^p$ spaces associated with an arbitrary measure}

\author{Nitin Bartwal}

\address{Department of Mathematics, Indian Institute of Technology Madras, Chennai-600036, Tamil Nadu, India}
\email{ma22d017@smail.iitm.ac.in, bartwalsinghnitin@gmail.com}

\author{A. Sathish Kumar}

\address{Department of Mathematics, Indian Institute of Technology Madras, Chennai-600036, Tamil Nadu, India}
\email{sathishkumar@iitm.ac.in, mathsatish9@gmail.com}

\keywords{Neural Networks, $L^p$ spaces, Approximation, Rate of convergence}
\subjclass[2010] {46E30, 41A35, 26A15}

\begin{abstract}

   In this paper, we investigate the approximation behavior of both one and multidimensional neural network (NN) inspired operators for functions in $L^p(I^d,\rho)$, where $1\leq p<\infty$, associated with a general measure $\rho$ defined over a hypercube $I^d$. First, we prove the uniform approximation for a continuous function and the $L^p$ approximation theorem by the NN operators in one and multidimensional settings. In addition, we also obtain the $L^p$ error bounds in terms of $\mathcal{K}$-functionals for these neural network operators. Finally, we consider logistic and tangent hyperbolic activation functions and verify the hypothesis of the theorems. We also show the implementation of continuous and integrable functions approximation by NN operators with respect to the Lebesgue and Jacobi measures defined on $[0,1]\times[0,1]$ with logistic and tangent hyperbolic activation functions. 
\end{abstract}

\maketitle

\section{Introduction}

 A neural network is a mathematical framework modeled after the structural and functional principles of the human brain, aiming to replicate the cognitive processes by which humans interpret information and learn from past interactions. This learning mechanism is implemented through multiple layers of interconnected units called  neurons that process input data using successive applications of affine transformations followed by nonlinear activation functions.
Let $x\in \mathbb{R}^d$ and $d\in\mathbb{N}$. Then, the feed-forward neural network (FNNs) with one hidden layer is defined by 
\begin{eqnarray*}
N_{n}(x)=\displaystyle\sum_{\ell=0}^{n}c_{\ell}\sigma(\langle\alpha_{\ell}.x\rangle+\beta_{\ell}),
\end{eqnarray*}
where $0\leq\ell\leq n,$ $\beta_{\ell}\in\mathbb{R}$ are thresholds and $\alpha_{\ell}\in \mathbb{R}^d$ are connection weights and $c_{\ell}\in \mathbb{R}$ are the coefficients. It is well known that FNNs with one hidden layer and non-polynomial activation function can approximate any continuous function uniformly on compact subsets of $\R^d$ if given a sufficient number of neurons \cite{Cybenko}. Further, the approximation of measurable functions by these neural networks was analyzed in \cite{Hornik}.\par
  When the underlying function is known, a neural network can be constructed directly rather than learned through repeated training. By incorporating the structure of the function into the network architecture and weights, accurate approximation can be achieved without iterative optimization. This reduces computational effort, improves numerical stability, and makes the resulting model easier to interpret. 
 In \cite{carda}, Cardaliaguet and Euvrard analyzed the approximation properties of both functions and their derivatives using the feed forward neural network. Inspired by this work, Anastassiou studied the approximation of continuous functions and their rate of convergence of neural network operators in \cite{Anas1}. Further, he analyzed the approximation behavior NN operators using different activation functions in one and multidimensional settings, see \cite{Anas2, Anas3, GAA1, GAA2, GAA3} and the references therein. 
Furthermore, the point-wise, uniform convergence results and the order of convergence of the NN operators were proved by Costarelli and Spigler in \cite{Costar1, Costar2}. The approximation behavior of Kantorovich neural network operators were analyzed in different settings, see \cite{Cosr, Cosrv1, Cosrv2, Cosrv3} and the references therein. Approximation by NN-operators have been studied widely by several authors in different directions, see \cite{ASB, PNA, VEI, CMR, LCD, COM, HLG, CHE}. Recently, Costarelli \cite{Cost1} estimated the approximation error for the NN operators in terms of the averaged modulus of smoothness in the settings of the $L^{p}$ spaces corresponding to the Lebesgue measure. We extend the study of the approximation properties of NN operators in $L^p$ spaces, where $1\leq p\leq \infty$, associated with a general measure. This measure is defined on a $d$-dimensional hypercube and assumed to satisfy a  support condition. Weighted Lebesgue spaces are particular instances within this broader class of function spaces and can be used in image analysis. 

More specifically, an image can be represented as an element of a weighted Lebesgue space, which provides a functional analytic framework for image analysis. Formally, a grayscale image can be viewed as a measurable function 
 $$f:\Omega\subset\R^2\to \R,$$ where $\Omega$ is the domain of the image and $f(x,y)$ gives the pixel intensity at point $(x,y).$ In a weighted space, the function $f(x,y)$ is equipped with the norm
 $$\|f\|_{L^p}:=\left(\int_{\Omega}|f(x,y)|^p \,w(x,y)dx\,dy\right)^\frac{1}{p},$$ where $w(x,y)$ is a positive weight function. The weight alters the contribution of different regions of the image to the overall measurement, allowing one to emphasize or de-emphasize specific areas. This is useful in applications such as feature detection, where regions of interest may be prioritized, or in noise modeling, where uncertain areas can be down-weighted. Thus, treating an image as an element of a weighted Lebesgue space not only embeds it in a rigorous mathematical structure but also provides flexibility for adapting analysis to the characteristics of the image.\\
Due to the importance of weighted norm spaces in image analysis, we study the approximation properties of the NN inspired operators for functions belonging to $L^p(I^d,\rho)$, where $\rho$ is any measure on the hypercube satisfying some support condition. This work is inspired by the work of Berdysheva and her collaborators. Berdysheva and Jetter \cite{Elena1} initiated the study of Bernstein-Durrmeyer operator with respect to arbitrary measure on $d$-dimensional simplex $S^d.$ Further, she proved the uniform convergence of these operators for continuous functions by assuming the strict positivity of measure $\rho$ on the simplex in \cite{Elena2}. Furthermore, by relaxing the conditions on the support of measure $\rho$, she proved the point-wise and uniform approximation results for these operators in \cite{Elena3}. Before analyzing the convergence behavior of the neural network operators with respect to an arbitrary measure on $I^d,$ we recall the following notations and basic definitions. 
 
\subsection{Notations and Preliminaries}
We consider the following notations and preliminaries, which shall be used throughout this paper.

Let $I^d:=[0,1]^d:=\{\mathbf{x}=(x_1,x_2,\ldots,x_d): 0\leq x_1,x_2,\ldots,x_d\leq 1\}$ be the hypercube of dimension $d$ in $\R^d$. 
Let $\beta$ be a multi-index such that
$$\beta=(k_1,k_2,\ldots,k_d), \ \ \mbox{and} \ \ \frac{\beta}{n}=\left(\frac{k_1}{n}, \frac{k_2}{n},\ldots,\frac{k_d}{n}\right),$$ where $0\leq k_i\leq n$ for $i\in \{1,\ldots, d\}.$ \\

Consider a partition of $I^d$ into $2^d$ subsets. For a subset $A\subseteq\{1,2,\ldots,d\},$ define the set $I_A\subseteq I^d$ as follows: $$I_A=\{\mathbf{x}:=(x_1, x_2,\ldots,x_d)\in I^d: x_i\geq\frac{1}{2},\, \forall i\in A \}.$$
Let $\eta>0.$ For $\frac{\beta}{n}=(\frac{k_1}{n},\frac{k_2}{n},\ldots,\frac{k_d}{n})\in I_A$, we define the set $V_{A,\,\delta}(\frac{\beta}{n})$ by
$$V_{A,\,\delta}\left(\frac{\beta}{n}\right)=\prod_{i=1}^n\left[\frac{k_i}{n}-\eta\,\mathbf{1}_{A}(i),\, \frac{k_i}{n}+\eta\,\mathbf{1}_{A^c}(i)\right],$$ where $\mathbf{1}_{A}$ denotes the indicator function of $A.$ It is easy to verify that if $\frac{\beta}{n}\in I_A$, then $V_{A,\,\eta}\left(\frac{\beta}{n}\right)\subset I^d$ for $0<\eta<\frac{1}{2}.$\\

Furthermore, for a given $\frac{\beta}{n}=(\frac{k_1}{n},\frac{k_2}{n},\ldots,\frac{k_d}{n}),$ we also define the set  $U_\eta({\frac{\beta}{n})}$ by
 $$U_\eta\left({\frac{\beta}{n}}\right):=\prod_{i=1}^d\left(\frac{k_i}{n}-\eta,\,\frac{k_i}{n}+\eta\right).$$

\begin{definition}
A bounded Borel measure $\rho$ is said to be strictly positive on $I^d$ if 
$\rho(A\cap I^d)>0$
for every open set of $A\subseteq\R^d$ such that $A\cap I^d\neq\emptyset.$ 
\end{definition}

Now, we define the $L^p(I^d,\rho)$ space. Let $1\leq p< \infty$. We denote by $L^p(I^d,\rho)$ the space of all real-valued measurable functions on $I^d$ such that
\begin{equation*}
\int_{I^d}|f(x)|^pd\rho(x)<\infty.
\end{equation*}
The corresponding norm on $L^p(I^d, \rho)$ is given by
$$\|f\|_{L^{p}(I^d,\rho)}:=\left(\int_{I^d}|f(x)|^pd\rho(x)\right)^{\frac{1}{p}}.$$
The space $L^{\infty}(I^d,\rho)$ is the set of all essentially bounded functions on the hypercube $I^d.$ The corresponding norm on $L^{\infty}(I^d,\rho)$ is given by 
$$\|f\|_{L^{\infty}(I^d,\rho)}:=\displaystyle{{\text{ess} \sup}_{x\in I^d}}|f(x)|.$$
We denote by $C(I^d)$ the space of all continuous functions on $I^d$ and their norm is defined by 
$$\|f\|_\infty:=\displaystyle{\sup_{x\in I^d}}|f(x)|.$$

Now, we recall some basic definitions and properties of the sigmoidal function $\sigma.$

\begin{definition}
A sigmoidal function $\sigma$ is a measurable function with 
$$\lim_{x\to-\infty}\sigma(x)=0 \,\, \mbox{and} \lim_{x\to+\infty}\sigma(x)=1.$$ 
\end{definition}

Throughout this article $\sigma$ is assumed to be a non-decreasing function satisfying the following assumptions unless stated otherwise: 
\begin{enumerate}
    \item[$(A_1)$] $\sigma(x)-\frac{1}{2}$ is an odd function. 
    \item[$(A_2)$] $\sigma\in C^2(\R)$ is concave for $x\geq0.$ \
     \item[$(A_3)$] $\sigma(x)=\mathcal{O}(|x|^{-\beta})$ as $x\to-\infty$ for some $\beta>1.$ 
\end{enumerate} 

\begin{definition}
    For the sigmoidal function $\sigma,$ we define the density or kernel function $\phi_\sigma$ as follows: 
    \begin{align}
\phi_\sigma(x):= \frac{1}{2}(\sigma(x+1)-\sigma(x-1)).\label{kernel} 
\end{align}
\end{definition}

We now list some well-known properties of the kernel $\phi_\sigma$ that will be used throughout this article. For more details and proofs of these properties, one can refer to \cite{Costar1}.
\begin{enumerate}
\item[$(1)$] $\phi_\sigma(x)$ is a non negative function. 
\item[$(2)$] $\phi_\sigma(x)$ is non decreasing for $x<0$ and non increasing for $x\geq0.$
\item[$(3)$] $\phi_\sigma(x)=\mathcal{O}(|x|^{-\beta})$ as $x\to\pm\infty.$ 
\item[$(4)$] For every $x\in \R$, we have 
$$\sum_{k\in\Zb}\phi_\sigma(x-k)=1.$$
\item[$(5)$] Let $x\in I$ and $n\in{\N}$. Then, we have `
$$\sum_{k=0}^n\phi_\sigma(nx-k)\geq\phi_\sigma(1)>0.$$
\end{enumerate}
\begin{definition}
    The $r^{th}$ order discrete absolute moment of $\phi_\sigma(x)$ is defined as $$M_r(\phi_\sigma):=\sup_{x\in\mathbb{R}}\sum_{k\in\Zb  }|x-k|^r\phi_\sigma(x-k).$$ 
\end{definition}
Under the assumption $(A_3)$ on $\sigma$ (see \cite{Costar1}), we have $$M_r(\phi_\sigma)<+\infty, \  \mbox{for} \ 0\leq r<\beta-1.$$

To obtain quantitative estimates for the rate of convergence of $L^p$ approximation, we employ the $\mathcal{K-}$ functionals.   
\begin{definition}  The $\mathcal{K}-$functional for a function $f\in \LP$ is defined as follows:
$$\mathcal{K}(f,t)_p:=\inf_{g\in W^{1, \infty}(I^d,\rho)}\{\|f-g\|_{\LP}+t\|g\|_{1,\infty}\},$$  
where the associated Sobolev space $W^{1,\infty}(I^d, \rho)$ is defined by $$W^{1,\infty}(I^d, \rho):=\left\{g: g,\,\frac{\partial g}{\partial x_j}\in L^{\infty}(I^d,\rho)\, \,  \,\, 1\leq j\leq d\right\},$$ and $\|g\|_{1,\infty}$ is a semi-norm on $W^{1,\infty}(I^d,\rho)$, and is given by 
$$\|g\|_{1,\infty}:= \sum_{j=1}^d \left\|\frac{\partial g}{\partial x_j}\right\|_{L^\infty(I^d,\rho)}.$$ It is important to note that the partial derivatives here are considered in the weak sense. 
\end{definition}

Let $\Phi_\sigma:\R^d\to\R$ be such that $$\Phi_\sigma(x_1, x_2,\ldots,x_d):=\prod_{i=1}^d\phi_\sigma(x_i),$$ where $\phi_\sigma$ is the usual kernel defined in (\ref{kernel}). Now, we define the multivariate NN operator with respect to the measure $\rho$ for $f:I^d\to\R$, where $f$ is a suitable function that depends on the space under consideration.
\begin{definition}
Let $\rho$ be a non negative bounded Borel measure on $I^d$ and $1\leq p \leq \infty$. For $f\in \LP,\, n\in \N^+.$ The multivariate Neural Network operators with respect to measure $\rho$ is defined by \begin{equation}
 S^{\rho}_nf(\X)=\displaystyle{\dfrac{\sum_{k_1=0}^n\sum_{k_2=0}^n\ldots\sum_{k_d=0}^nc_{n,\beta}\,\Phi_\sigma(nx_1-k_1,nx_2-k_2,\ldots,nx_d-k_d)}{\sum_{k_1=0}^n\sum_{k_2=0}^n\ldots\sum_{k_d=0}^n\Phi_\sigma(nx_1-k_1,nx_2-k_2,\ldots,nx_d-k_d)}}, \label{mopdef} \end{equation} where the coefficient $c_{n,\beta}$ is given by 
$$c_{n,\beta}:=\dfrac{\int_{I^d}f(t)\,\Phi_\sigma(nt_1-k_1,nt_2-k_2,\ldots,nt_d-k_d)\,d\rho(t)}{\int_{I^d}\Phi_\sigma(nt_1-k_1,nt_2-k_2,\ldots,nt_d-k_d)\,d\rho(t)}.$$ \end{definition}

It is easy to see that the operator (\ref{mopdef}) is well defined for all $f\in L^{\infty}(I^d,\rho).$ Indeed, we have
\begin{equation*}
    |S^{\rho}_nf(x)|\leq \max_{\frac{\beta}{n} \in I^d}|c_{n,\beta}| \leq \dfrac{\int_{I^d}|f(t)|\,\Phi_\sigma(nt_1-k_1,nt_2-k_2,\ldots,nt_d-k_d)\,d\rho(t)}{\int_{I^d}\Phi_\sigma(nt_1-k_1,nt_2-k_2,\ldots,nt_d-k_d)\,d\rho(t)}
    \leq \|f\|_{\infty}.
    \end{equation*}

This paper is structured as follows. In Section \ref{Uni}, we consider the univariate version of the operator (\ref{mopdef}) and show that it converges uniformly for all continuous functions on $I.$ In addition, we also prove that the family of operators $\{S_n^\rho\}_{n\in\N}$ is uniformly bounded in $\Lp$, and using the density of the continuous function, we obtain the $\Lp$ norm convergence of the operator. Further, we also get $L^p$ error bounds for the operator in terms of $\mathcal{K}$-functionals. In Section \ref{multi}, we extend the approximation results of section \ref{Uni} to the multidimensional setting, by taking neural network operator defined on a hypercube. In Section \ref{example}, we focus on some specific sigmoidal functions and verify the assumptions of the theorems to validate the proposed theory. In addition, we approximate particular continuous and integrable functions by NN operators with respect to the Lebesgue and Jacobi measures defined on $[0,1]\times[0,1]$ with logistic and tangent hyperbolic activation functions.

\section{Univariate Neural Network operators with respect to arbitrary measures} \label{Uni}
In this section, we consider the univariate version of the operator (\ref{mopdef}), and we derive the uniform approximation and $L^p$ error bounds in terms of $\mathcal{K}$-functionals.
The uniform approximation results were inspired by the work of Berdysheva and her collaborators (see \cite{Elena1}, \cite{Elena2}, \cite{Elena3}), while the $L^p$-convergence theorems were influenced by Li \cite{BZli} on the Bernstein–Durrmeyer operators. Building on their ideas, we establish the uniform and 
convergence theorems for the NN operators associated with an arbitrary measure.
These results will be used to get the approximation results of multidimensional Neural Network operators.  We denote the interval $[0,1]$ by $I.$

Before delving into the analysis,  we briefly recall some basic definitions and results which will be useful to derive the uniform convergence of univariate Neural Network operators. 
\begin{definition}
For $f\in C(I),$ the neural network operator $F_n$ is defined as follows: 
$$ F_nf (x): =\dfrac{\displaystyle{\sum_{k=0}^n}f\left(\frac{k}{n}\right)\phi_\sigma(nx-k)}{\displaystyle{\sum_{k=0}^n}\phi_\sigma(nx-k)}, \quad\quad x\in I.$$ 
\end{definition}

We recall the following convergence result from \cite{Costar1}. 
\begin{theorem} \label{opuni}
    Let $f:[0,1]\to\R$ be a bounded function. Then
    $$\lim_{n\to\infty}F_nf(x)=f(x)$$ at any point $x\in[0,1]$ of continuity of $f.$ Moreover, if $f$ is continuous on $[0,1]$ then we have
    $$\lim_{n\to\infty}\|F_nf-f\|_{\infty}=0.$$
\end{theorem}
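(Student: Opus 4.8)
The plan is to reduce everything to a single normalized error sum and then to exploit the bell-shaped decay of $\phi_\sigma$ through its discrete moments. Writing the operator as a ratio, I would first record the identity
$$F_nf(x)-f(x)=\frac{\sum_{k=0}^n\left(f\!\left(\tfrac{k}{n}\right)-f(x)\right)\phi_\sigma(nx-k)}{\sum_{k=0}^n\phi_\sigma(nx-k)},$$
which is valid because subtracting $f(x)$ from the normalized weighted average of the samples $f(k/n)$ amounts to subtracting $f(x)$ from each sampled value. The denominator is controlled from below by Property $(5)$, namely $\sum_{k=0}^n\phi_\sigma(nx-k)\geq\phi_\sigma(1)>0$, uniformly in $x\in I$ and in $n$, so the whole problem reduces to estimating the numerator.

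For the numerator I would fix $\varepsilon>0$ and, using continuity of $f$ at $x$, choose $\delta>0$ so that $|f(y)-f(x)|<\varepsilon$ whenever $|y-x|<\delta$. Splitting the index set according to whether $|k/n-x|<\delta$ or $|k/n-x|\geq\delta$, the near indices contribute at most $\varepsilon\sum_{k\in\Zb}\phi_\sigma(nx-k)=\varepsilon$ by nonnegativity (Property $(1)$) together with the partition of unity (Property $(4)$). On the far indices I would use only boundedness of $f$, so that $|f(k/n)-f(x)|\leq 2\|f\|_{\infty}$, leaving the tail mass $\sum_{|nx-k|\geq n\delta}\phi_\sigma(nx-k)$ to be controlled.

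The crux, and the step I expect to be the main obstacle, is showing that this tail vanishes as $n\to\infty$. Here I would invoke the decay hypothesis $(A_3)$ through the finiteness of the discrete absolute moment: choosing any $0<r<\beta-1$ so that $M_r(\phi_\sigma)<\infty$, on the far range $|nx-k|\geq n\delta$ one has $\phi_\sigma(nx-k)\leq (n\delta)^{-r}|nx-k|^r\phi_\sigma(nx-k)$, whence
$$\sum_{|nx-k|\geq n\delta}\phi_\sigma(nx-k)\leq\frac{1}{(n\delta)^r}\sum_{k\in\Zb}|nx-k|^r\phi_\sigma(nx-k)\leq\frac{M_r(\phi_\sigma)}{(n\delta)^r}.$$
Combining the three estimates with the lower bound on the denominator yields
$$|F_nf(x)-f(x)|\leq\frac{1}{\phi_\sigma(1)}\left(\varepsilon+\frac{2\|f\|_{\infty}M_r(\phi_\sigma)}{(n\delta)^r}\right).$$

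Letting $n\to\infty$ forces the second term to $0$, so $\limsup_{n}|F_nf(x)-f(x)|\leq\varepsilon/\phi_\sigma(1)$, and since $\varepsilon$ is arbitrary the pointwise limit follows at every point of continuity. For the uniform statement I would simply note that a continuous function on the compact interval is uniformly continuous, so $\delta$ may be chosen independently of $x$; every bound above is then uniform in $x$, and since the right-hand side tends to $\varepsilon/\phi_\sigma(1)$ uniformly, we conclude $\|F_nf-f\|_{\infty}\to 0$.
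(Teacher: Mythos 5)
The paper does not actually prove this statement: it is recalled verbatim from \cite{Costar1}, so there is no internal proof to compare against. Your argument is correct and is essentially the standard proof from that reference — write the error as a normalized sum of $f(k/n)-f(x)$, split into near and far indices, use continuity at $x$ plus the partition of unity on the near part, kill the tail on the far part, and bound the denominator below by $\phi_\sigma(1)$ — with uniform continuity upgrading the pointwise statement to the uniform one. The one point worth flagging is that your tail estimate via $M_r(\phi_\sigma)$ needs some $r$ with $0<r<\beta-1$, i.e.\ $\beta>1$ in $(A_3)$; this is implicitly assumed throughout the paper (e.g.\ the hypothesis $M_p(\phi_\sigma)<\infty$ in Theorem \ref{errorest}), whereas \cite{Costar1} obtains the same tail decay directly from the decay of $\phi_\sigma$ and the convergence of $\sum_{k}\phi_\sigma(\cdot-k)$, which amounts to the same thing.
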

Now we define the univariate version of the operator (\ref{mopdef}) and discuss their approximation properties.
\begin{definition}
Let $\rho$ be a strictly positive bounded Borel measure on $I$. We define the neural network operator $S^{\rho}_{n}$ with respect to the measure $\rho$ for $f\in C(I)$ as follows:
\[S^{\rho}_{n}f(x):=\dfrac{\displaystyle{\sum_{k=0}^{n}}{c_{n,k}\,\phi_\sigma(nx-k)}}{\displaystyle{\sum_{k=0}^{n}}\phi_\sigma(nx-k)}\quad\quad x\in I,\label{opdefuni}\] where the coefficient $c_{n,k}$ is given by 
$$c_{n,k}:=\dfrac{\int_{0}^{1}f(t)\phi_\sigma(nt-k)\,d\rho(t)} {\int_{0}^{1}\phi_\sigma(nt-k)\,d\rho(t)}.$$ 
 It is easy to see that $S^{\rho}_{n}$ is a positive linear operator and reproduces constant functions.
\end{definition}
    
First, we prove the following lemma which will be useful in proving the uniform approximation of the continuous function on $I$ using a univariate NN operator. 

\begin{lemma} \label{mmpos}
 Let $\rho$ be a strictly positive bounded Borel measure on $I^d$. Then for each  $\delta>0$ and  $A\subseteq \{1,2,\ldots,d\}$, we have 
  $$\inf_{\frac{\mathbf{\beta}}{n}\in I_A}\rho\left(V_{A,\delta}\left(\frac{\beta}{n}\right)\right) >0,$$ where $\dfrac{\beta}{n}=\left(\dfrac{k_1}{n},\dfrac{k_2}{n},\ldots,\dfrac{k_d}{n}\right)$, $n\in\N$ and $0\leq k_i\leq n.$ 
  \end{lemma}
  
  \begin{proof} For $\delta \leq\frac{1}{2}$, we know that $V_{A,\,\delta}\left(\frac{\beta}{n}\right)\subset I^d.$ Take $N\in\N$ such that $\frac{1}{N}<\frac{\delta}{2},$ and cover $I^d$ with cubes $\Delta_{\frac{\alpha}{N}},$ where $\Delta_{\frac{\alpha}{N}}=\prod_{i=1}^d[\frac{\alpha_i}{N}, \frac{\alpha_i+1}{N}]$ and $\alpha=(\alpha_1, \alpha_2,\ldots,\alpha_d)$ and $\alpha_i\in\{0, 1, 2,\ldots, N-1\}.$ It is easy to check that $\Delta_\alpha \subseteq I^d.$ Since $\rho$ is a strictly positive Borel measure on $I^d, $ we have $\min_{\alpha}\rho(\Delta_{\frac{\alpha}{N}})>0.$  
  The set $V_{A,\,\delta}\left(\frac{\beta}{n}\right)$ contains $V_{A,\,\frac{\delta}{2}}\left(\frac{\beta}{n}\right)$, which contains $\Delta_{\frac{\alpha}{N}}$ for some $\alpha,$ where $\alpha=(\alpha_1, \alpha_2.\ldots,\alpha_d)$ and $\alpha_i\in \{0,1,\ldots,N-1\}.$  Hence, for all $\frac{\beta}{n}\in I_A,$ we have
  $$\rho\left(V_{A,\,\delta}\left(\frac{\beta}{n}\right)\right)\geq\min_\alpha \rho(\Delta_{\frac{\alpha}{n}})>0.$$ 
  For $\delta>\frac{1}{2}$ the statement follows from the monotone property of the measure $\rho$
  and the case when $\delta<\frac{1}{2}.$
  

  \end{proof}

  Now, we prove the uniform convergence of the operator $S^{\rho}_{n}$ for continuous functions on $I.$

\begin{theorem} \label{uniapp}
Let $\rho$ be a strictly positive bounded Borel measure on $I.$  Suppose for $0<\delta<1,$ either
\begin{equation}
  \lim_{n\to\infty} \dfrac{\max\{\phi_\sigma(nt-k): t\in [0,1]\setminus (\frac{k}{n}-\delta, \frac{k}{n}+\delta) \}}{\min\{\phi_\sigma(nt-k): t\in[\frac{k}{n}, \frac{k}{n}+\delta^2]\}}= 0 
\end{equation} or \begin{equation}
  \lim_{n\to\infty} \dfrac{\max\{\phi_\sigma(nt-k): t\in [0,1]\setminus (\frac{k}{n}-\delta, \frac{k}{n}+\delta) \}}{\min\{\phi_\sigma(nt-k): t\in[\frac{k}{n}-\delta^2, \frac{k}{n}]\}}= 0. 
\end{equation}
Then for every $f\in C(I)$, we have 
$$\lim_{n\to\infty}\|S^{\rho}_nf-f\|_{\infty}=0.$$
\end{theorem}

\begin{proof} For all $x\in I$, we have
\begin{eqnarray*}
|S^{\rho}_nf(x)-f(x)|
&\leq& |S^{\rho}_nf(x)-F_nf(x)| + |F_nf(x)-f(x)|\\
&:=& I_1 + I_2.
\end{eqnarray*}
    By Theorem \ref{opuni}, we have $I_2\to 0$ uniformly as $n\to\infty.$ So we only need to estimate the term $I_1.$ We have 
\begin{eqnarray*}
I_1 
&\leq& \dfrac{\displaystyle{\sum_{k=0}^{n}}\left|c_{n,k}-f\left(\frac{k}{n}\right)\right|\,\phi_\sigma(nx-k)}{\sum_{k=0}^{n}\phi_\sigma(nx-k)} \\
&\leq& \max_{k=0,1,\ldots,n}\left|c_{n,k}-f\left(\frac{k}{n}\right)\right|.
\end{eqnarray*}
It is enough to show that $\max_{k=0,1,\ldots,n}\left|c_{n,k}-f\left(\frac{k}{n}\right)\right|\to0$ as $n\to\infty.$ 

Since $f$ is continuous on $I,$ so for $\forall\,\epsilon>0,$ $\exists \,\, 0<\delta<\frac{1}{2}$ such that $|f(x)-f(y)|<\epsilon$ whenever $|x-y|<\delta$. We divide the proof into two cases: \\
\textbf{Case 1:} $\frac{k}{n}\leq\frac{1}{2}$
\begin{eqnarray*}
\left|c_{n,k}-f\left(\frac{k}n{}\right)\right|
&\leq&\dfrac{\displaystyle\int_{0}^{1}\left|f(t)-f\left(\frac{k}{n}\right)\right|\phi_\sigma(nt-k)\,d\rho(t)}{\int_{0}^1\phi_\sigma(nt-k)\,d\rho(t)}\\ 
&=& \dfrac{\displaystyle{\int_{(\frac{k}{n}-\delta, \frac{k}{n}+\delta)\cap [0,1]}}\left|f(t)-f\left(\frac{k}{n}\right)\right|\phi_\sigma(nt-k)\,d\rho(t)}{\int_{0}^1\phi_\sigma(nt-k)\,d\rho(t)} \\&&+\dfrac{\displaystyle{\int_{[0,1]\setminus(\frac{k}{n}-\delta, \frac{k}{n}+\delta)}}\left|f(t)-f\left(\frac{k}{n}\right)\right|\phi_\sigma(nt-k)\,d\rho(t)}{\int_{0}^1\phi_\sigma(nt-k)\,d\rho(t)}. 
\end{eqnarray*}
Using the uniform continuity and boundedness of $f$ in $I$, we get 
 \begin{eqnarray*}
\left|c_{n,k}-f\left(\frac{k}n{}\right)\right| &\leq& \epsilon + 2 M \dfrac{\displaystyle{\int_{[0,1]\setminus(\frac{k}{n}-\delta, \frac{k}{n}+\delta)}}\phi_\sigma(nt-k)\,d\rho(t)}{\int_{0}^1\phi_\sigma(nt-k)\,d\rho(t)} \\
 &\leq& \epsilon + 2M\,\dfrac{\max\{\phi_\sigma(nt-k): t\in [0,1]\setminus (\frac{k}{n}-\delta, \frac{k}{n}+\delta) \}}{\min\{\phi_\sigma(nt-k): t\in[\frac{k}{n}, \frac{k}{n}+\delta^2]\}} \ \ \dfrac{\rho([0,1]}{\rho([\frac{k}{n}, \frac{k}{n}+\delta^2])} \\
 &\leq& \epsilon+ 2M\,\dfrac{\max\{\phi_\sigma(nt-k): t\in [0,1] \setminus(\frac{k}{n}-\delta, \frac{k}{n}+\delta) \}}{\min\{\phi_\sigma(nt-k): t\in[\frac{k}{n}, \frac{k}{n}+\delta^2]\}} \ \ \dfrac{\rho(I)}{\rho([\frac{k}{n}, \frac{k}{n}+\delta^2])}. 
    \end{eqnarray*}
By Lemma \ref{mmpos}, we have $\rho([\frac{k}{n}, \frac{k}{n}+\delta^2])>0.$ Further, by the hypothesis of the theorem, we have 
\[\dfrac{\max\{\phi_\sigma(nt-k): t\in [0,1]\setminus(\frac{k}{n}-\delta, \frac{k}{n}+\delta) \}}{\min\{\phi_\sigma(nt-k): t\in[\frac{k}{n}, \frac{k}{n}+\delta^2]\}}\to 0\quad \text{as}\,\, n\to \infty,\]
for $0<\delta<1.$ \\
\textbf{Case 2:\,$\frac{k}{n}>\frac{1}{2}.$} 
Repeating the first few steps of case 1, we get
\begin{eqnarray*}
\left|c_{n,k}-f\left(\frac{k}n{}\right)\right| 
&\leq& \epsilon + 2 M \dfrac{\displaystyle{\int_{[0,1]\setminus(\frac{k}{n}-\delta, \frac{k}{n}+\delta)}}\phi_\sigma(nt-k)\,d\rho(t)}{\int_{0}^1\phi_\sigma(nt-k)\,d\rho(t)} \\
&\leq& \epsilon+ 2M\,\dfrac{\max\{\phi_\sigma(nt-k): t\in [0,1] \setminus(\frac{k}{n}-\delta, \frac{k}{n}+\delta) \}}{\min\{\phi_\sigma(nt-k): t\in[\frac{k}{n}-\delta^2, \frac{k}{n}]\}} \ \ \dfrac{\rho(I)}{\rho([\frac{k}{n}-\delta^2, \frac{k}{n}])}
\end{eqnarray*}
By Lemma \ref{mmpos} and the assumption of the theorem, we get the desired result. Hence, the proof is completed.
\end{proof}
In the following lemma, we show the boundedness of NN operator $S_n^\rho$ for functions in $\Lp.$
\begin{lemma} \label{uoplpboubd}
Let $1\leq p<\infty$. Then for $f\in \Lp$, we have
$$\|S^{\rho}_nf\|_{\Lp}\leq\|f\|_{\Lp}.$$
\end{lemma}

\begin{proof} 
Let $f\in\Lp$. Then, we have
\begin{equation*}
    \|S^{\rho}_nf\|^p_{\Lp}=\int_{0}^{1}\left|\sum_{k=0}^{n}\dfrac{\int_{0}^1f(t)\phi_\sigma(nt-k)d\rho(t)}{\int_{0}^1\phi_\sigma(nt-k)d\rho(t)} \dfrac{\phi_\sigma(nx-k)}{\sum_{k=0}^{n}\phi_\sigma(nx-k)}\right|^p d\rho(x). 
    \end{equation*}
Using Jensen's inequality and Holder's inequality, we obtain
\begin{eqnarray*}
\|S^{\rho}_nf\|^p_{\Lp} &\leq& \displaystyle{\int_{0}^1} \left(\displaystyle{\sum_{k=0}^n}\left(\dfrac{\int_0^1f(t)\phi_\sigma(nt-k)d\rho(t)}{\int_0^1\phi_\sigma(nt-k)d\rho(t)}\right)^p\dfrac{\phi_\sigma(nx-k)}{\sum_{k=0}^n\phi_\sigma(nx-k)}\right)d\rho(x) \\
&\leq& \dfrac{1}{\phi_\sigma(1)} \displaystyle{\sum_{k=0}^n}\dfrac{\left(\int_0^1f(t)\phi_\sigma(nt-k)d\rho(t)\right)^p}{\left(\int_0^1\phi_\sigma(nt-k)d\rho(t)\right)^{p-1}} \\
&\leq& \dfrac{1}{\phi_\sigma(1)} \displaystyle{\sum_{k=0}^n}\dfrac{\left(\int_0^1|f(t)|^p\phi_\sigma(nt-k)d\rho(t)\right)\left(\int_0^1\phi_\sigma(nt-k)d\rho(t)\right)^{\frac{p}{q}}}{\left(\int_0^1\phi_\sigma(nt-k)d\rho(t)\right)^{p-1}} \\
&\leq& \dfrac{1}{\phi_\sigma(1)} \displaystyle{\sum_{k=0}^n}\int_0^1|f(t)|^p\phi_\sigma(nt-k)d\rho(t) \\
&\leq& \|f\|_{\Lp}^p.     
\end{eqnarray*} 
Thus, the proof is completed. 
\end{proof} 

Now, we prove the $\Lp$ convergence of $S^{\rho}_n.$
 \begin{corollary} \label{oplpcon}
    Let $1\leq p<\infty.$ Under the assumptions of Theorem \ref{uniapp}, for $f\in \Lp,$ we have
$$\lim_{n\to\infty}\|S^{\rho}_nf-f\|_{\Lp}=0.$$
\end{corollary}
\begin{proof}

From theorem \ref{uniapp}, it is easy to see that for $g\in C(I)$, we have \begin{equation}
    \lim_{n\to \infty}\|S^{\rho}_ng-g\|^p_{\Lp}=0.\label{contlp}
\end{equation}
Applying the triangle inequality and the lemma \ref{uoplpboubd}, we obtain 
   \begin{eqnarray*}
    \|S^{\rho}_nf-f\|_{\Lp} 
    &\leq&  \|S^{\rho}_nf-S^{\rho}_ng\|_{\Lp}+ \|S^{\rho}_ng-g\|_{\Lp} + \|f-g\|_{\Lp}\\
    &\leq& 2\|f-g\|_{\Lp} + \|S^{\rho}_ng-g\|_{\Lp}.
   \end{eqnarray*}
 Using the density of $C(I)$ in $\Lp,$ and (\ref{contlp}) we get the desired result.
\end{proof}
     
In the following theorem, we estimate the error in the approximation in terms of $\mathcal{K}-$functional.
\begin{theorem} \label{errorest}
    Let $1\leq p<\infty$. Suppose that $M_p(\phi_\sigma)<\infty.$ Then for $f\in \Lp$, we have
    $$\|S^{\rho}_nf-f\|_{\Lp}\leq C\,\mathcal{K}\left(f,\frac{1}{n}\right). $$
\end{theorem}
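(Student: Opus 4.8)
The plan is to combine the $L^p$-stability of $S_n^\rho$ from Lemma \ref{oplpboubd} with a quantitative estimate of $\|S_n^\rho g-g\|_{\Lp}$ for smooth $g$, and then optimise over $g$. For an arbitrary $g\in W^{1,\infty}$, linearity and the triangle inequality give
$$\|S_n^\rho f-f\|_{\Lp}\leq \|S_n^\rho(f-g)\|_{\Lp}+\|S_n^\rho g-g\|_{\Lp}+\|g-f\|_{\Lp}\leq 2\|f-g\|_{\Lp}+\|S_n^\rho g-g\|_{\Lp},$$
where the last step uses Lemma \ref{oplpboubd}. Hence everything reduces to establishing a bound of the form $\|S_n^\rho g-g\|_{\Lp}\leq \frac{C'}{n}\,\|g\|_{1,\infty}$ with $C'$ independent of $g$ and $n$; once this is in hand, the displayed inequality reads $\|S_n^\rho f-f\|_{\Lp}\leq C\big(\|f-g\|_{\Lp}+\frac1n\|g\|_{1,\infty}\big)$ for every $g\in W^{1,\infty}$, and taking the infimum over $g$ produces $C\,\mathcal{K}(f,\frac1n)$.

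For the core estimate I would use that $g\in W^{1,\infty}$ is Lipschitz, $|g(t)-g(x)|\leq \|g\|_{1,\infty}\,|t-x|$. Writing $S_n^\rho g(x)-g(x)$ as the weighted average $\big(\sum_k (c_{n,k}-g(x))\phi_\sigma(nx-k)\big)/\sum_k\phi_\sigma(nx-k)$ with $c_{n,k}-g(x)=\big(\int_0^1(g(t)-g(x))\phi_\sigma(nt-k)\,d\rho(t)\big)/\int_0^1\phi_\sigma(nt-k)\,d\rho(t)$, I would apply Jensen's inequality twice---over the index $k$ and over the inner probability measure $\phi_\sigma(nt-k)\,d\rho(t)/\int_0^1\phi_\sigma(ns-k)\,d\rho(s)$, exactly as in Lemma \ref{oplpboubd}---and then insert the Lipschitz bound to obtain
$$\|S_n^\rho g-g\|_p^p\leq \|g\|_{1,\infty}^p\int_0^1\frac{\sum_k\Big(\int_0^1|t-x|^p\phi_\sigma(nt-k)\,d\rho(t)\big/\int_0^1\phi_\sigma(nt-k)\,d\rho(t)\Big)\phi_\sigma(nx-k)}{\sum_k\phi_\sigma(nx-k)}\,d\rho(x).$$

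The main obstacle is the coupling of the source variable $t$ and the evaluation variable $x$ in the bracketed ratio, together with the fact that the normalising denominator $\int_0^1\phi_\sigma(nt-k)\,d\rho(t)$ may be small for a general measure $\rho$, so that a crude bound fails to yield the necessary $n^{-p}$ decay. The resolution is the split $|t-x|^p\leq 2^{p-1}\big(|t-\tfrac kn|^p+|x-\tfrac kn|^p\big)$: the $|x-\tfrac kn|^p$ term is constant in $t$ and collapses the inner ratio to $1$, while for the $|t-\tfrac kn|^p$ term, after lowering the outer denominator by property $(5)$ of the kernel, $\sum_k\phi_\sigma(nx-k)\geq\phi_\sigma(1)>0$, the $x$-integration of $\phi_\sigma(nx-k)$ reproduces precisely the denominator $\int_0^1\phi_\sigma(nt-k)\,d\rho(t)$ and cancels it. Both contributions then reduce to $n^{-p}\int_0^1|nt-k|^p\phi_\sigma(nt-k)\,d\rho(t)$, and summing over $k$ with the discrete moment bound $\sum_k|nt-k|^p\phi_\sigma(nt-k)\leq M_p(\phi_\sigma)<\infty$ gives
$$\|S_n^\rho g-g\|_p^p\leq \frac{2^p\,M_p(\phi_\sigma)\,\rho(I)}{\phi_\sigma(1)\,n^p}\,\|g\|_{1,\infty}^p.$$
This is the required estimate with $C'=2\big(M_p(\phi_\sigma)\rho(I)/\phi_\sigma(1)\big)^{1/p}$; taking $C=\max\{2,C'\}$ and the infimum over $g$ finishes the proof. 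I expect the bookkeeping in the double Jensen step and the verification that the denominator genuinely cancels (rather than merely being bounded below) to be the delicate points.
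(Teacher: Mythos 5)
Your proposal is correct and follows essentially the same route as the paper: reduce to $g\in W^{1,\infty}$ via the triangle inequality and Lemma \ref{oplpboubd}, use the Lipschitz bound, apply Jensen's inequality twice, split around $\frac{k}{n}$, and invoke $M_p(\phi_\sigma)$ together with the lower bound $\sum_k\phi_\sigma(nx-k)\geq\phi_\sigma(1)$ and the cancellation of $\int_0^1\phi_\sigma(n\cdot-k)\,d\rho$. The only (harmless) difference is that you split $|t-x|^p\leq 2^{p-1}(|t-\tfrac kn|^p+|x-\tfrac kn|^p)$ after raising to the $p$-th power, whereas the paper splits $|t-x|\leq|t-\tfrac kn|+|\tfrac kn-x|$ inside $S_n^{\rho}(|t-x|)$ and then takes $L^p$ norms of the two pieces separately.
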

\begin{proof} We know that $S^{\rho}_n(1)=1$ and $\|S^{\rho}_n\|_{\Lp}=1$.
For any $g\in W^{1,\infty}(I)$, we get
\begin{align}
\|S^{\rho}_nf-f\|_{\Lp}
 &\leq \|S^{\rho}_nf-S^{\rho}_ng\|_{\Lp}+\|S^{\rho}_ng-g\|_{\Lp}+\|f-g\|_{\Lp} \nonumber \\
 &\leq 2 \|f-g\|_{\Lp}+\|S^{\rho}_ng-g\|_{\Lp}. \label{Kestimate}
\end{align}
Now we estimate $\|S^{\rho}_ng-g\|_{\Lp}$ for $g\in W^{1,\infty}(I,\rho)$. 
For all $g\in W^{1,\infty}(I,\rho)$, we have 
\begin{align}
|g(t)-g(x)|\leq \|g\|_{1,\infty}|t-x|, \ \ \forall \, x,\,t\in I.\label{gestimate}
\end{align}
Since $S^{\rho}_n$ is a positive linear operator and it reproduces the constant, so we get
 $$|S^{\rho}_ng(x)-g(x)|=|S^{\rho}_n(g(t)-g(x))(x)|\leq S^{\rho}_n(|g(t)-g(x)|)(x)\leq \|g\|_{1,\infty} S^{\rho}_n(|t-x|)(x),$$
 
 For $x \in I$, we have
\begin{align}
    \|S^{\rho}_ng - g\|_{\Lp} &\leq \|g\|_{1,\infty} \|S^{\rho}_n(|t-x|)\|_{\Lp}. \label{gnestimate}
\end{align}

Now we estimate $S^{\rho}_n(|t-x|)(x).$ We begin by writing it as:
\begin{align*}
S^{\rho}_n(|t-x|)(x) &= \sum_{k=0}^n \frac{\int_0^1 |t-x| \phi_\sigma(nt-k) d\rho(t)}{\int_0^1 \phi_\sigma(nt-k) d\rho(t)} \frac{\phi_\sigma(nx-k)}{\sum_{k=0}^n \phi_\sigma(nx-k)} \\
&\leq \sum_{k=0}^n \frac{\int_0^1 |t - \frac{k}{n}| \phi_\sigma(nt-k) d\rho(t)}{\int_0^1 \phi_\sigma(nt-k) d\rho(t)} \frac{\phi_\sigma(nx-k)}{\sum_{k=0}^n \phi_\sigma(nx-k)} \\
&\quad + \sum_{k=0}^n \frac{\int_0^1 \left|\frac{k}{n} - x\right| \phi_\sigma(nt-k) d\rho(t)}{\int_0^1 \phi_\sigma(nt-k) d\rho(t)} \frac{\phi_\sigma(nx-k)}{\sum_{k=0}^n \phi_\sigma(nx-k)} \\
&= I_1 + I_2.
\end{align*}

Taking the $\Lp$-norm on both sides of the above expression, we get
\begin{align}
    \|S^{\rho}_n(|t-x|)\|_{\Lp} &\leq \|I_1\|_{\Lp} + \|I_2\|_{\Lp}. \label{modestimate}
\end{align}

We first estimate $\|I_1\| _{\Lp}$. Using Jensen's inequality twice, we obtain 
 \begin{align}
\|I_1\|^p_{\Lp} &= \displaystyle{\int_0^1\left(\sum_{k=0}^n\dfrac{\int_0^1|t-\frac{k}{n}|\,|\phi_\sigma(nt-k)|d\rho(t)}{\int_0^1\phi_\sigma(nt-k)d\rho(t)}\dfrac{\phi_\sigma(nx-k)}{\sum_{k=0}^n\phi_\sigma(nx-k)}\right)^p}d\rho(x) \nonumber\\
&\leq \displaystyle{\int_0^1\sum_{k=0}^n\left(\dfrac{\int_0^1|t-\frac{k}{n}|\,|\phi_\sigma(nt-k)|d\rho(t)}{\int_0^1\phi_\sigma(nt-k)d\rho(t)}\right)^p\dfrac{\phi_\sigma(nx-k)}{\sum_{k=0}^n\phi_\sigma(nx-k)}}d\rho(x) \nonumber \\
&\leq \displaystyle{\int_0^1\sum_{k=0}^n\dfrac{\int_0^1|t-\frac{k}{n}|^p\,|\phi_\sigma(nt-k)|d\rho(t)}{\left(\int_0^1\phi_\sigma(nt-k)d\rho(t)\right)}\dfrac{\phi_\sigma(nx-k)}{\sum_{k=0}^n\phi_\sigma(nx-k)}}d\rho(x) \quad \nonumber \\
&\leq \frac{1}{n^p\,\phi_\sigma(1)} \displaystyle{\sum_{k=0}^n}\int_0^1 |nt-k|^p\phi_\sigma(nt-k)d\rho(t) \nonumber\\
&\leq \frac{1}{n^p\,\phi_\sigma(1)} M_p(\phi_\sigma) \, \rho(I) 
=  \frac{C}{n^p}. \label{I1estimate}
\end{align}
Similarly, we estimate $\|I_2\|_{\Lp}.$ Again, using Jensen's inequality, we get 
\begin{align}
\|I_2\|^p_{\Lp} &= \displaystyle{\int_0^1 \left( \sum_{k=0}^n\left|\frac{k}{n}-x\right| \dfrac{\phi_\sigma(nx-k)}{\sum_{k=0} ^n\phi_\sigma(nx-k)} \right)^p} d\rho(x) \nonumber \\
&\leq \displaystyle{\int_0^1 \left( \sum_{k=0}^n\left|\frac{k}{n}-x\right|^p \dfrac{\phi_\sigma(nx-k)}{\sum_{k=0} ^n\phi_\sigma(nx-k)} \right)} d\rho(x) \nonumber \\
&\leq \frac{1}{\phi_\sigma(1)} \int_{0}^1 \left(\sum_{k=0}^n\left|\frac{k}{n}-x\right|^p \phi_\sigma(nx-k)\right)  d\rho(x) \nonumber \\
&\leq \dfrac{1}{n^p\,\phi_\sigma(1)} M_p(\phi_\sigma)\, \rho(I)
 = \frac{C}{n^p}. \label{I2estimate}
\end{align}
On combining the estimates (\ref{gnestimate})-(\ref{I2estimate}), we obtain
\begin{equation}
    \|S^{\rho}_ng-g\|_{\Lp}  \leq C \, \|g\|_{1,\infty} \frac{1}{n}. \label{npestimate}
\end{equation}
Substituting (\ref{npestimate}) into (\ref{Kestimate}), and taking the infimum over all $g\in W^{1,\infty}(I,\rho)$, we get the desired result.
\end{proof}

\section{Multivariate Neural Network operators with respect to arbitrary measures} \label{multi}
In this section, we analyze the approximation properties of multivariate neural network operators with respect to arbitrary measures. In particular, we derive the uniform approximation and the error bounds in terms of $\mathcal{K}$-functionals. Before proving these results, we recall the following multivariate neural network operator.
\begin{definition}
Let $f:I^d\to\R$ be a bounded function and $n\in\N$. Then the multivariate neural network operator $F_n$ is defined as follows (see \cite{Costar2}):
$$F_nf(x):=\displaystyle{\dfrac{\sum_{k_1=0}^n\sum_{k_2=0}^n\ldots\sum_{k_d=0}^n f\left(\frac{\beta}{n}\right)\,\Phi_\sigma(nx_1-k_1,nx_2-k_2,\ldots,nx_d-k_d)}{\sum_{k_1=0}^n\sum_{k_2=0}^n\ldots\sum_{k_d=0}^n\Phi_\sigma(nx_1-k_1,nx_2-k_2,\ldots,nx_d-k_d)}},\quad x\in I^d$$
 where $\beta$ is a multi-index such that $\beta=(k_1,k_2,\ldots,k_d),$ and $\frac{\beta}{n}=\left(\frac{k_1}{n}, \frac{k_2}{n},\ldots,\frac{k_d}{n}\right)$. \end{definition}
First, we recall the following theorem from \cite{Costar2}.
\begin{theorem} \label{muni}
    Let $f:I^d\to\R$ be a bounded. If $f$ is continuous at $\X$, then $$\lim_{n\to\infty}
    F_nf(\X)=f(\X).$$ Further, if $f\in C(I^d)$, then we have
    $$\lim_{n\to\infty}\|F_nf-f\|_{\infty}=0.$$
\end{theorem}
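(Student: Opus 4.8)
The plan is to mimic the one-dimensional splitting argument, exploiting the tensor-product form of $\Phi_\sigma$ to reduce every estimate to the univariate kernel $\phi_\sigma$ and its moments. First I would record two structural facts. Since $\Phi_\sigma(nx_1-k_1,\ldots,nx_d-k_d)=\prod_{i=1}^d\phi_\sigma(nx_i-k_i)$, the denominator factors as $\prod_{i=1}^d\left(\sum_{k_i=0}^n\phi_\sigma(nx_i-k_i)\right)$, and property $(5)$ bounds each factor below by $\phi_\sigma(1)$; hence the denominator is at least $\phi_\sigma(1)^d>0$ uniformly in $x\in I^d$ and $n$, so $F_n$ is well defined. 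Taking $f\equiv 1$ shows the numerator equals the denominator, so $F_n$ reproduces constants. This lets me write, for fixed $x$,
$$|F_nf(x)-f(x)|\leq\frac{\sum_\beta\left|f\!\left(\tfrac{\beta}{n}\right)-f(x)\right|\,\Phi_\sigma(nx_1-k_1,\ldots,nx_d-k_d)}{\sum_\beta\Phi_\sigma(nx_1-k_1,\ldots,nx_d-k_d)},$$
where $\sum_\beta$ abbreviates the $d$-fold sum over multi-indices $\beta=(k_1,\ldots,k_d)$ with $0\leq k_i\leq n$.

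Next, given $\epsilon>0$ and a point $x$ of continuity, I would choose $\delta>0$ with $|f(y)-f(x)|<\epsilon$ whenever $\max_i|y_i-x_i|<\delta$, and split the sum according to whether $\max_i|\tfrac{k_i}{n}-x_i|<\delta$ or not. On the near part the continuity bound and the fact that the normalized near weights sum to at most $1$ give a contribution at most $\epsilon$. On the far part I would bound $\left|f(\tfrac{\beta}{n})-f(x)\right|\leq 2M$ with $M=\|f\|_\infty$, so that it remains to control $\sum_{\text{far}}\Phi_\sigma(nx_1-k_1,\ldots,nx_d-k_d)$ divided by the denominator.

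For this tail I would use subadditivity: the far index set is the union over $i$ of $\{|\tfrac{k_i}{n}-x_i|\geq\delta\}$, so the far sum is at most $\sum_{i=1}^d$ of the corresponding one-coordinate-far sums. Factoring each such term and using $\sum_{k\in\Zb}\phi_\sigma(nx_j-k)=1$ (property $(4)$) to absorb the remaining $d-1$ coordinates, the far sum is bounded by $\sum_{i=1}^d\sum_{|k_i/n-x_i|\geq\delta}\phi_\sigma(nx_i-k_i)$. Each inner sum is a univariate tail, which I would estimate by inserting the factor $\left(\tfrac{|nx_i-k_i|}{n\delta}\right)^r\geq 1$ and invoking the finiteness of $M_r(\phi_\sigma)$ for a suitable $r>0$, giving a bound $d\,M_r(\phi_\sigma)(n\delta)^{-r}$. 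Dividing by the denominator lower bound $\phi_\sigma(1)^d$, the far contribution is at most $2M\,d\,M_r(\phi_\sigma)\big((n\delta)^r\phi_\sigma(1)^d\big)^{-1}$, which tends to $0$ as $n\to\infty$. Hence $\limsup_{n}|F_nf(x)-f(x)|\leq\epsilon$, and pointwise convergence follows since $\epsilon$ is arbitrary.

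For the uniform statement, I note that when $f\in C(I^d)$ the compactness of $I^d$ makes $f$ uniformly continuous, so $\delta$ may be taken independent of $x$; moreover the tail bound above is already uniform in $x$, because $M_r(\phi_\sigma)$ is a supremum over all shifts and the denominator bound $\phi_\sigma(1)^d$ does not depend on $x$. Taking the supremum over $x\in I^d$ then yields $\|F_nf-f\|_\infty\to 0$. The step I expect to require the most care—the main obstacle—is precisely this uniform tail estimate: the product structure must be unwound so that being far in the supremum norm is reduced, via the union over coordinates, to a single far univariate factor while the remaining factors are absorbed through property $(4)$, after which the moment bound delivers decay that is uniform in $x$.
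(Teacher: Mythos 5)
The paper does not prove this statement; it is recalled from \cite{Costar2}, so there is no internal proof to compare against. Your argument is correct and is essentially the standard one from that reference: reproduction of constants, the near/far splitting of the multi-index sum with respect to the max-norm distance $\max_i|k_i/n-x_i|$, the lower bound $\phi_\sigma(1)^d$ for the factored denominator, and the reduction of the far part to univariate tails by writing the far set as a union over coordinates and absorbing the remaining $d-1$ factors via $\sum_{k\in\Zb}\phi_\sigma(\cdot-k)=1$. The only point worth flagging is the tail estimate: inserting the factor $\bigl(|nx_i-k_i|/(n\delta)\bigr)^r$ requires some $r>0$ with $M_r(\phi_\sigma)<\infty$, i.e.\ $\beta>1$ in $(A_3)$. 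This is harmless in the context of this paper, which elsewhere assumes $M_p(\phi_\sigma)<\infty$ for $p\geq 1$, but the cited theorem does not need it: by the monotonicity of $\phi_\sigma$ on each half-line, the univariate tail $\sum_{|nx_i-k_i|\geq n\delta}\phi_\sigma(nx_i-k_i)$ is dominated by $\sum_{|j|\geq n\delta-1}\phi_\sigma(j)$, a tail of the convergent series $\sum_{j\in\Zb}\phi_\sigma(j)=1$ (property (4) at $x=0$), which tends to $0$ uniformly in $x$ without any moment hypothesis; your moment version buys an explicit rate $O((n\delta)^{-r})$ at the cost of the stronger decay assumption.
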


 Now, we prove the uniform convergence of the NN operator $S_n^\rho$ for the functions in $C(I^d)$.
\begin{theorem} \label{munifo}
Let $\rho$ be a strictly positive bounded Borel measure on $I^d$. Suppose for $0<\delta<1,$ either
\begin{align}\lim_{n\rightarrow \infty} \dfrac{\max\{\phi_\sigma(nt-k): t\in [0,1]\setminus (\frac{k}{n}-\delta, \frac{k}{n}+\delta) \}}{\min\{\phi_\sigma(nt-k): t\in[\frac{k}{n}, \frac{k}{n}+\delta^2]\}}=0  \end{align} or 
\begin{align}\lim_{n\rightarrow \infty} \dfrac{\max\{\phi_\sigma(nt-k): t\in  [0,1]\setminus (\frac{k}{n}-\delta, \frac{k}{n}+\delta)\}}{\min\{\phi_\sigma(nt-k): t\in[\frac{k}{n}-\delta^2, \frac{k}{n}]\}}=0. \end{align}
 Then for $f\in C(I^d)$, we have $$\lim_{n\to\infty}\|S^{\rho}_nf-f\|_{\infty}=0.$$
\end{theorem}

\begin{proof}
    For all $x\in I^d$, we have
\begin{eqnarray*}
|S^{\rho}_nf(\X)-f(\X)|
&\leq& |S^{\rho}_nf(\X)-F_nf(\X| + |F_nf(\X)-f(\X)|\\
&:=& I_1 + I_2.
\end{eqnarray*}
Using Theorem \ref{muni}, we have $I_2\to 0$ uniformly as $n\to\infty$. We now estimate the term $I_1.$
 \begin{eqnarray*}
     I_1 &\leq& \left|\displaystyle{\dfrac{\sum_{k_1=0}^n\sum_{k_2=0}^n\ldots\sum_{k_d=0}^n\left|c_{n,\beta}-f\left(\frac{\beta}{n}\right)\right|\,\Phi_\sigma(nx_1-k_1,nx_2-k_2,\ldots,nx_d-k_d)}{\sum_{k_1=0}^n\sum_{k_2=0}^n\ldots\sum_{k_d=0}^n\Phi_\sigma(nx_1-k_1,nx_2-k_2,\ldots,nx_d-k_d)}}\right| \\
     &\leq& \max_{\frac{\beta}{n}\in I^d} \left|c_{n,\beta}-f\left(\frac{\beta}{n}\right)\right|.
 \end{eqnarray*}
 So, it is enough to show that $\left|c_{n,\beta}-f\left(\frac{\beta}{n}\right)\right|\to0$ as $n\to\infty.$ Suppose $\frac{\beta}{n}\in I_A.$
 Since $f$ is uniformly continuous in $I^d,$ for every $\epsilon>0$, there exists $0<\delta<\frac{1}{2}$ such that $|f(x)-f(y)|<\epsilon$ whenever $\displaystyle \X,\,\mathbf{y}\in U_{\delta}(\beta)\cap I^d.$ Using the boundedness and uniform continuity of $f$, we have \\

 \noindent 
 $\displaystyle\left|c_{n,\beta}-f\left(\frac{\beta}{n}\right)\right| $
\begin{align*}
& \leq \dfrac{\int_{I^d}\left|f(\mathbf{t})-f\left(\frac{\beta}{n}\right)\right|\,\Phi_\sigma(nt_1-k_1, nt_2-k_2, \ldots, nt_d-k_d)\,d\rho(t)}{\int_{I^d} \Phi_\sigma(nt_1-k_1, nt_2-k_2, \ldots, nt_d-k_d)\,d\rho(t)} \\
& = \dfrac{\int_{U_{\delta}\left(\frac{\beta}{n}\right) \cap I^d} \left| f(\mathbf{t}) - f\left(\frac{\beta}{n}\right) \right|\,\Phi_\sigma(nt_1-k_1, nt_2-k_2, \ldots, nt_d-k_d)\,d\rho(t)}{\int_{I^d} \Phi_\sigma(nt_1-k_1, nt_2-k_2, \ldots, nt_d-k_d)\,d\rho(t)} \\
& \quad + \dfrac{\int_{I^d \setminus U_{\delta}\left(\frac{\beta}{n}\right)} \left| f(\mathbf{t}) - f\left(\frac{\beta}{n}\right) \right|\, \Phi_\sigma(nt_1-k_1, nt_2-k_2, \ldots, nt_d-k_d)\,d\rho(t)}{\int_{I^d} \Phi_\sigma(nt_1-k_1, nt_2-k_2, \ldots, nt_d-k_d)\,d\rho(t)} \\
& \leq \epsilon + 2M \, \dfrac{\int_{I^d \setminus U_{\delta}\left(\frac{\beta}{n}\right)} \Phi_\sigma(nt_1-k_1, nt_2-k_2, \ldots, nt_d-k_d)\,d\rho(t)}{\int_{V_{A,\delta^2}\left(\frac{\beta}{n}\right)} \Phi_\sigma(nt_1-k_1, nt_2-k_2, \ldots, nt_d-k_d)\,d\rho(t)} \\
& \leq \epsilon + 2M \, \dfrac{\max\left\{ \Phi_\sigma(nt_1-k_1, nt_2-k_2, \ldots, nt_d-k_d) : (t_1, t_2, \ldots, t_d) \in I^d \setminus U_{\delta}\left(\frac{\beta}{n}\right) \right\}}{\min\left\{ \Phi_\sigma(nt_1-k_1, nt_2-k_2, \ldots, nt_d-k_d) : (t_1, t_2, \ldots, t_d) \in V_{A, \delta^2}\left(\frac{\beta}{n}\right) \right\}} \\
& \quad \times \dfrac{\rho(I^d)}{\rho\left( V_{A, \delta^2}\left(\frac{\beta}{n}\right) \right)}.
\end{align*}
Since $(t_1,t_2,\ldots, t_d)\in V_{A, \delta^2}\left(\frac{\beta}{n}\right),$ then $t_i\in [\frac{k_i}{n}-\delta^2, \frac{k_i}{n}]\,\text{or}\, [\frac{k_i}{n}, \frac{k_i}{n}+\delta^2].$ By the assumption of the theorem and noting that $\phi_\sigma$ is a non decreasing for $x<0$ and non increasing for $x\geq0$ and $\Phi_\sigma(nt_1-k_1,nt_2-k_2,\ldots,nt_d-k_d)=\prod_{i=1}^d\phi_\sigma(nt_i-k_i)$, we have 
    $$\lim_{n\to\infty}  \dfrac{ \max\{\Phi_\sigma(nt_1-k_1,nt_2-k_2,\ldots,nt_d-k_d):(t_1, t_2,\ldots, t_d)\in I^d\setminus U_{\delta}\left(\frac{\beta}{n}\right)\}}{\min\{\Phi_\sigma(nt_1-k_1,nt_2-k_2,\ldots,nt_d-k_d):(t_1, t_2,\ldots, t_d)\in V_{A,\delta^2}\left(\frac{\beta}{n}\right)\}}=0.$$
 This completes the proof. 
\end{proof}

In the following lemma, we show the boundedness of the NN operator $S_n^\rho$ for functions in $\LP.$
\begin{lemma} \label{oplpboubd}
Let $1\leq p<\infty$. Then for $f\in \LP$, we have
$$\|S^{\rho}_nf\|_{\LP}\leq\|f\|_{\LP}.$$
\end{lemma}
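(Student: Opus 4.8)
The plan is to replicate the one-dimensional boundedness argument of Lemma~\ref{oplpboubd} from Section~\ref{Uni}, now in the $d$-dimensional setting, the only structural difference being the product form of the kernel $\Phi_\sigma$ and the presence of $d$ nested summations. First I would write out $\|S^{\rho}_nf\|^p_{\LP}$ as the integral over $I^d$ of the $p$-th power of the operator, recognizing that for each fixed $x$ the quantity
$$\frac{\Phi_\sigma(nx_1-k_1,\ldots,nx_d-k_d)}{\sum_{k_1=0}^n\cdots\sum_{k_d=0}^n\Phi_\sigma(nx_1-k_1,\ldots,nx_d-k_d)}$$
is a family of nonnegative weights summing to $1$ over the multi-index $\beta=(k_1,\ldots,k_d)$. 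This convexity is exactly what powers Jensen's inequality, allowing me to pull the outer $p$-th power inside the multi-sum.

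The next step is to apply H\"older's inequality to each coefficient $c_{n,\beta}$ precisely as in the univariate proof: writing the integrand $f(t)\,\Phi_\sigma$ as $\bigl(|f(t)|^p\Phi_\sigma\bigr)^{1/p}\cdot \Phi_\sigma^{1/q}$ and using $\Phi_\sigma\ge 0$, I obtain
$$\left(\int_{I^d}f(t)\,\Phi_\sigma\,d\rho(t)\right)^p\le\left(\int_{I^d}|f(t)|^p\,\Phi_\sigma\,d\rho(t)\right)\left(\int_{I^d}\Phi_\sigma\,d\rho(t)\right)^{p/q},$$
where I have abbreviated the full argument $\Phi_\sigma(nt_1-k_1,\ldots,nt_d-k_d)$. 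Since $p/q=p-1$, the factors $\int_{I^d}\Phi_\sigma\,d\rho(t)$ cancel after dividing by $\bigl(\int_{I^d}\Phi_\sigma\,d\rho(t)\bigr)^{p-1}$, leaving each summand bounded by $\int_{I^d}|f(t)|^p\,\Phi_\sigma\,d\rho(t)$. I would then integrate in $x$ and invoke the denominator lower bound. Property~$(5)$ of the univariate kernel $\phi_\sigma$ gives $\sum_{k=0}^n\phi_\sigma(nx_i-k)\ge\phi_\sigma(1)$ for each coordinate, so by the product structure $\sum_\beta\Phi_\sigma(nx_1-k_1,\ldots,nx_d-k_d)\ge\phi_\sigma(1)^d>0$, and this constant factors out cleanly.

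Finally, after the cancellations the estimate collapses to a constant multiple of $\sum_\beta\int_{I^d}|f(t)|^p\,\Phi_\sigma\,d\rho(t)$, and I would interchange the (finite) sum over $\beta$ with the integral and use property~$(4)$ applied coordinatewise, namely $\sum_{k\in\Zb}\phi_\sigma(nt_i-k)=1$ for each $i$, to conclude that $\sum_\beta\Phi_\sigma(nt_1-k_1,\ldots,nt_d-k_d)\le 1$. This reduces the whole bound to $\int_{I^d}|f(t)|^p\,d\rho(t)=\|f\|^p_{\LP}$, as desired.

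I do not anticipate a genuine obstacle here, since the argument is a faithful lift of the univariate Lemma~\ref{oplpboubd}; the main point requiring care is bookkeeping of the product kernel, specifically that both the lower bound $\phi_\sigma(1)^d$ on the denominator and the summation identity $\sum_\beta\Phi_\sigma\le 1$ hold only because $\Phi_\sigma$ factors as $\prod_{i=1}^d\phi_\sigma(nt_i-k_i)$, so that one-dimensional kernel properties~$(4)$ and~$(5)$ may be applied in each coordinate and multiplied. The one subtlety worth stating explicitly is that property~$(4)$ is an identity over all of $\Zb$, whereas our sums run only over $0\le k_i\le n$; since $\phi_\sigma\ge 0$ by property~$(1)$, truncating the sum can only decrease it, which is exactly the direction needed for the upper bound.
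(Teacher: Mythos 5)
Your proposal reproduces the paper's proof of this lemma essentially step for step: Jensen's inequality with the normalized weights $\Phi_\sigma/\sum_\beta\Phi_\sigma$, H\"older's inequality applied inside each coefficient $c_{n,\beta}$ so that the powers of $\int_{I^d}\Phi_\sigma\,d\rho$ cancel, the lower bound $(\phi_\sigma(1))^d$ for the denominator obtained coordinatewise from property $(5)$, and finally the truncated partition of unity $\sum_\beta\Phi_\sigma\le 1$ from property $(4)$. Your closing remarks about why the product structure of $\Phi_\sigma$ is what allows the one--dimensional kernel properties to be used are exactly the right bookkeeping points.

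One caveat, which applies equally to the paper's own write-up and so does not distinguish your argument from theirs: the factor $(\phi_\sigma(1))^{-d}$ introduced when the denominator $\sum_\beta\Phi_\sigma(nx_1-k_1,\ldots,nx_d-k_d)$ is replaced by $(\phi_\sigma(1))^d$ does not in fact ``factor out cleanly.'' Since $\phi_\sigma(1)=\tfrac12\bigl(\sigma(2)-\tfrac12\bigr)<1$, this factor is strictly larger than $1$ and survives to the last line, so the chain of inequalities as written establishes
$\|S^{\rho}_nf\|_{\LP}\le(\phi_\sigma(1))^{-d/p}\,\|f\|_{\LP}$
rather than the stated bound with constant $1$. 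A uniform bound of this kind is all that the subsequent density argument for $L^p$ convergence actually requires, but if the constant $1$ is insisted upon, the step that discards the denominator in favor of $(\phi_\sigma(1))^d$ must be replaced by something sharper, since the normalization $\sum_\beta\Phi_\sigma/\sum_\beta\Phi_\sigma=1$ in the $x$ variable does not combine in any obvious way with the cancellation against $\int_{I^d}\Phi_\sigma\,d\rho(t)$ in the $t$ variable. You should either prove the weaker inequality with an explicit constant or supply the missing argument for the constant $1$.
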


\begin{proof} 
Let $f\in \LP$. Then, using Jensen's and Holder's inequality, we get
\begin{align*}
    \|S_n^{\rho}f\|_{\LP}^p  &= \int_{I^d} \left| \sum_{k_1=0}^n \ldots \sum_{k_d=0}^n \left( \frac{\int_{I^d} f(t) \prod_{j=1}^d \phi_\sigma(nt_j - k_j) \, d\rho(t)}{\int_{I^d} \prod_{j=1}^d \phi_\sigma(nt_j - k_j) \, d\rho(t)} \right) \right. \\
&\quad \left. \times \frac{\prod_{j=1}^d \phi_\sigma(nx_j - k_j)}{\sum_{k_1=0}^n \ldots \sum_{k_d=0}^n \prod_{j=1}^d \phi_\sigma(nx_j - k_j)} \right|^p d\rho(x) \\
&\leq \int_{I^d}  \sum_{k_1=0}^n \ldots \sum_{k_d=0}^n \left| \frac{\int_{I^d} f(t) \prod_{j=1}^d \phi_\sigma(nt_j - k_j) \, d\rho(t)}{\int_{I^d} \prod_{j=1}^d \phi_\sigma(nt_j - k_j) \, d\rho(t)} \right|^p  \\
&\quad  \times \frac{\prod_{j=1}^d \phi_\sigma(nx_j - k_j)}{\sum_{k_1=0}^n \ldots \sum_{k_d=0}^n \prod_{j=1}^d \phi_\sigma(nx_j - k_j)}  d\rho(x) \\
 &\leq \frac{1}{(\phi_\sigma(1))^d}  \sum_{k_1=0}^n \ldots \sum_{k_d=0}^n  \frac{\left(\int_{I^d} f(t) \prod_{j=1}^d \phi_\sigma(nt_j - k_j) \, d\rho(t)\right)^p}{\left(\int_{I^d} \prod_{j=1}^d \phi_\sigma(nt_j - k_j) \, d\rho(t)\right)^{p-1}}\\
 &\leq \frac{1}{(\phi_\sigma(1))^d}  \sum_{k_1=0}^n \ldots \sum_{k_d=0}^n  \frac{\left(\int_{I^d} |f(t)|^p \prod_{j=1}^d \phi_\sigma(nt_j - k_j) \, d\rho(t)\right)  }{\left(\int_{I^d} \prod_{j=1}^d \phi_\sigma(nt_j - k_j) \, d\rho(t)\right)^{p-1}}\\
 &\quad\times \left(\int_{I^d} \prod_{j=1}^d \phi_\sigma(nt_j - k_j) \, d\rho(t)\right)^{\frac{p}{q}} \\
 &\leq \frac{1}{(\phi_\sigma(1))^d}  \sum_{k_1=0}^n \ldots \sum_{k_d=0}^n  \left(\int_{I^d} |f(t)|^p \prod_{j=1}^d \phi_\sigma(nt_j - k_j) \, d\rho(t)\right) \\
 &\leq \|f\|_{\LP}.
\end{align*}
Hence, the proof is completed. 
\end{proof}
Next, we obtain the error bounds in terms of the $\mathcal{K}$-functionals.
\begin{theorem} 
    Let $1\leq p<\infty$. Suppose that $M_p(\phi_\sigma)<\infty.$ Then for $f\in \LP$, we have
    $$\|S^{\rho}_nf-f\|_{\LP}\leq C\,\mathcal{K}\left(f,\frac{1}{n}\right). $$
\end{theorem}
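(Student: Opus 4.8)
The plan is to mirror the univariate argument of Theorem~\ref{errorest}, replacing the scalar Lipschitz estimate by its multivariate counterpart and exploiting the product structure of $\Phi_\sigma$. First I would fix an arbitrary $g\in W^{1,\infty}(I^d)$ and, using the triangle inequality together with the $\LP$-boundedness of $S_n^\rho$ established in the preceding lemma (which gives $\|S_n^\rho(f-g)\|_{\LP}\leq\|f-g\|_{\LP}$), reduce the problem to
\begin{equation*}
\|S^{\rho}_nf-f\|_{\LP}\leq 2\|f-g\|_{\LP}+\|S^{\rho}_ng-g\|_{\LP}.
\end{equation*}
Since $S_n^\rho$ reproduces constants and is positive and linear, it then suffices to control $\|S_n^\rho g-g\|_{\LP}$ for smooth $g$ by a bound of the form $C\,\|g\|_{1,\infty}/n$; taking the infimum over $g$ at the very end converts this into the claimed $\mathcal{K}$-functional estimate.

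For the smooth term I would use the first-order estimate
\begin{equation*}
|g(t)-g(x)|\leq\sum_{j=1}^d\left\|\frac{\partial g}{\partial x_j}\right\|_{\infty}|t_j-x_j|,
\end{equation*}
valid for all $x,t\in I^d$ and $g\in W^{1,\infty}(I^d)$. Applying the positive operator $S_n^\rho$ and using $S_n^\rho(1)=1$ yields
\begin{equation*}
|S^{\rho}_ng(x)-g(x)|\leq\sum_{j=1}^d\left\|\frac{\partial g}{\partial x_j}\right\|_{\infty}S_n^\rho(|t_j-x_j|)(x).
\end{equation*}
Exactly as in the univariate case I would split $|t_j-x_j|\leq|t_j-\frac{k_j}{n}|+|\frac{k_j}{n}-x_j|$, producing for each coordinate two pieces $I_1^{(j)}$ and $I_2^{(j)}$, and estimate their $\LP$-norms separately by two applications of Jensen's inequality.

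The heart of the argument, and the main place where the multivariate setting differs, is the moment estimate after the double Jensen step. Here the product form $\Phi_\sigma(nt_1-k_1,\ldots,nt_d-k_d)=\prod_{i=1}^d\phi_\sigma(nt_i-k_i)$ is decisive: summing over the full multi-index $\beta=(k_1,\ldots,k_d)$ factors, so that for the coordinate-$j$ piece the sums over $k_i$ with $i\neq j$ collapse to $\sum_{k_i=0}^n\phi_\sigma(nt_i-k_i)\leq 1$ by property~(4), while the $j$-th sum $\sum_{k_j=0}^n|t_j-\frac{k_j}{n}|^p\phi_\sigma(nt_j-k_j)=n^{-p}\sum_{k_j}|nt_j-k_j|^p\phi_\sigma(nt_j-k_j)\leq n^{-p}M_p(\phi_\sigma)$ supplies the moment factor. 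The denominator $\sum_\beta\prod_i\phi_\sigma(nx_i-k_i)=\prod_i\sum_{k_i=0}^n\phi_\sigma(nx_i-k_i)\geq(\phi_\sigma(1))^d$ by property~(5) controls the normalization. Integrating the resulting uniform-in-$t$ bound against $\rho$ merely contributes a factor $\rho(I^d)$, so that each $\|I_1^{(j)}\|_{\LP}$ and $\|I_2^{(j)}\|_{\LP}$ is $O(1/n)$; summing over $j$ gives $\|S_n^\rho g-g\|_{\LP}\leq C\,\|g\|_{1,\infty}/n$. The key subtlety to get right is that $\rho$ is \emph{not} assumed to be a product measure, so the factorization must be performed on the $t$-summation of the kernel \emph{before} integrating in $\rho$; once the coordinate-wise moment bound is made uniform in $t$ this poses no difficulty, and substituting back and minimizing over $g\in W^{1,\infty}(I^d)$ completes the proof.
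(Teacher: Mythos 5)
Your proposal is correct and follows essentially the same route as the paper's proof: the same reduction $\|S_n^\rho f-f\|\leq 2\|f-g\|+\|S_n^\rho g-g\|$ via the $L^p$-boundedness lemma, the same coordinate-wise Lipschitz estimate and splitting of $S_n^\rho(|t_j-x_j|)$ about $k_j/n$, and the same double Jensen step followed by factoring the product kernel to extract $M_p(\phi_\sigma)/n^p$ against the denominator bound $(\phi_\sigma(1))^d$, before taking the infimum over $g\in W^{1,\infty}(I^d)$. No substantive differences.
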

\begin{proof} 
For any $g\in W^{1,\infty}(I^d,\rho)$, we get
\begin{align}
\|S^{\rho}_nf-f\|_{\LP} &\leq \|S^{\rho}_nf-S^{\rho}_ng\|_{\LP}+\|S^{\rho}_ng-g\|_{\LP}+\|f-g\|_{\LP} \nonumber \\
 &\leq 2 \|f-g\|_{\LP}+\|S^{\rho}_ng-g\|_{\LP}. \label{mKesti}
\end{align}
For all $g\in W^{1,\infty}(I^d,\rho)$, we have \begin{align}
|g(\mathbf{t})-g(\mathbf{x})|\leq \|g\|_{1,\infty}\sum_{i=1}^d|t_i-x_i|, \,\, \forall \, \X,\,\mathbf{t}\in  I^d.\end{align}
Since $S^{\rho}_n$ is a positive linear operator and reproduces the constant functions, we get
$$|S^{\rho}_ng(\mathbf{x})-g(\mathbf{x})|=|S^{\rho}_n(g(\mathbf{t})-g(\mathbf{x}))(\mathbf{x})|\leq S^{\rho}_n(|g(\mathbf{t})-g(\mathbf{x})|)(\mathbf{x})\leq \|g\|_{1,\infty}\sum_{i=1}^d S^{\rho}_n(|t_i-x_i|)(\mathbf{x}).$$
 Taking the $L^p$ norm on both sides, we get
 \begin{align}
     \|S^{\rho}_ng-g\|_{\LP}  \leq \|g\|_{1,\infty} \sum_{i=1}^d \|S^{\rho}_n(|\pi_i(\mathbf{t})-\pi_i(\X)|)\|_{\LP}, \label{mgesti}
 \end{align}
where $\pi_i:I^d\to\R$ is the projection on the $i^{th}$ coordinate. Now, we estimate  $S^{\rho}_n(|\pi_i(\mathbf{t})-\pi_i(\X)|).$
Let $i=1,2,\ldots,d.$ Then, we have
\begin{align*}
S^{\rho}_n(|\pi_i(\mathbf{t})-\pi_i(\X)|) &=   \sum_{k_1=0}^n \ldots \sum_{k_d=0}^n \left( \frac{\int_{I^d} |\pi_i(\mathbf{t})-\pi_i(\X)| \prod_{j=1}^d \phi_\sigma(nt_j - k_j) \, d\rho(t)}{\int_{I^d} \prod_{j=1}^d \phi_\sigma(nt_j - k_j) \, d\rho(t)} \right) \\
&\quad  \times \frac{\prod_{j=1}^d \phi_\sigma(nx_j - k_j)}{\sum_{k_1=0}^n \ldots \sum_{k_d=0}^n \prod_{j=1}^d \phi_\sigma(nx_j - k_j)}   \\
&\leq   \sum_{k_1=0}^n \ldots \sum_{k_d=0}^n \left( \frac{\int_{I^d} |\pi_i(\mathbf{t}) - \frac{k_i}{n}| \prod_{j=1}^d \phi_\sigma(nt_j - k_j) \, d\rho(t)}{\int_{I^d} \prod_{j=1}^d \phi_\sigma(nt_j - k_j) \, d\rho(t)} \right)  \\
&\quad  \times \frac{\prod_{j=1}^d \phi_\sigma(nx_j - k_j)}{\sum_{k_1=0}^n \ldots \sum_{k_d=0}^n \prod_{j=1}^d \phi_\sigma(nx_j - k_j)}  \\
&+   \sum_{k_1=0}^n \ldots \sum_{k_d=0}^n \left( \frac{\int_{I^d} \left| \frac{k_i}{n} - \pi_i(\X) \right| \prod_{j=1}^d \phi_\sigma(nt_j - k_j) \, d\rho(t)}{\int_{I^d} \prod_{j=1}^d \phi_\sigma(nt_j - k_j) \, d\rho(t)} \right) \\
&\quad  \times \frac{\prod_{j=1}^d \phi_\sigma(nx_j - k_j)}{\sum_{k_1=0}^n \ldots \sum_{k_d=0}^n \prod_{j=1}^d \phi_\sigma(nx_j - k_j)} \\
&\leq \sum_{k_1=0}^n \ldots \sum_{k_d=0}^n \left( \frac{\int_{I^d} |\pi_i(\mathbf{t}) - \frac{k_i}{n}| \prod_{j=1}^d \phi_\sigma(nt_j - k_j) \, d\rho(t)}{\int_{I^d} \prod_{j=1}^d \phi_\sigma(nt_j - k_j) \, d\rho(t)} \right)  \\
&\quad  \times \frac{\prod_{j=1}^d \phi_\sigma(nx_j - k_j)}{\sum_{k_1=0}^n \ldots \sum_{k_d=0}^n \prod_{j=1}^d \phi_\sigma(nx_j - k_j)}  \\
&+ \sum_{k_1=0}^n \ldots \sum_{k_d=0}^n \frac{\left( \int_{I^d} \left| \frac{k_i}{n} - \pi_i(\X) \right| \prod_{j=1}^d \phi_\sigma(nt_j - k_j) \, d\rho(t) \right)}{\sum_{k_1=0}^n \ldots \sum_{k_d=0}^n \prod_{j=1}^d \phi_\sigma(nx_j - k_j)} \\
&=: I_1 + I_2.
\end{align*} 
Taking the $L^p$ norm on both sides of the expression, we have 
\begin{align} 
\|S^{\rho}_n(|\pi_i(\mathbf{t})-\pi_i(\X)|)\|_{\LP} 
&\leq \|I_1\|_{\LP} + \|I_2\|_{\LP}. \label{msumesti} 
\end{align} 

Now we estimate $I_1$ and $I_2.$ Using Jensen's inequality twice, we obtain the following:
\begin{align}
\|I_2\|^p_{\LP} 
&= \int_{I^d} \left( \sum_{k_1=0}^n \ldots \sum_{k_d=0}^n \left| \frac{k_i}{n} - \pi_i(\X) \right| \frac{\prod_{j=1}^d \phi_\sigma(nx_j - k_j)} {\sum_{k_1=0}^n \ldots \sum_{k_d=0}^n \prod_{j=1}^d \phi_\sigma(nx_j - k_j)} \right)^p d\rho(x) \nonumber \\
&\leq \int_{I^d} \sum_{k_1=0}^n \ldots \sum_{k_d=0}^n \left| \frac{k_i}{n} - \pi_i(\X) \right|^p \frac{\prod_{j=1}^d \phi_\sigma(nx_j - k_j)} {\sum_{k_1=0}^n \ldots \sum_{k_d=0}^n \prod_{j=1}^d \phi_\sigma(nx_j - k_j)} d\rho(x) \nonumber \\
&\leq \frac{1}{(\phi_\sigma(1))^d} \int_{I^d} \sum_{k_1=0}^n \ldots \sum_{k_d=0}^n \left| \frac{k_i}{n} - \pi_i(\X) \right|^p \prod_{j=1}^d \phi_\sigma(nx_j - k_j) d\rho(x) \nonumber \\
&\leq \frac{1}{(\phi_\sigma(1))^d} \int_{I^d} \left( \sum_{k_1=0}^n \ldots \sum_{k_d=0}^n \left| \frac{k_i}{n} - \pi_i(\X) \right|^p \prod_{j=1}^d \phi_\sigma(nx_j - k_j) \right) d\rho(x) \nonumber \\
&=  \int_{I^d} \left( \sum_{k_1=0}^n \ldots \sum_{k_d=0}^n \left( \sum_{k_i=0}^n \left| \frac{k_i}{n} - \pi_i(\X) \right|^p \phi_\sigma(nx_i - k_1) \right) \prod_{\substack{j=1 \\ j\neq i}}^d \phi_\sigma(nx_j - k_j)\right) d\rho(x) \nonumber \\
&= \frac{1}{(\phi_\sigma(1))^p} \int_{I^d} \left( \sum_{k_i=0}^n \left| \frac{k_i}{n} - \pi_i(\X) \right|^p \phi_\sigma(nx_i - k_i) \right) \nonumber \\
&\qquad \times \left( \sum_{k_1=0}^n \ldots \sum_{k_d=0}^n \prod_{j=1,\, j \neq i}^d \phi_\sigma(nx_j - k_j) \right) d\rho(x) \nonumber \\
&\leq \frac{1}{(\phi_\sigma(1))^d} \frac{1}{n^p} M_p(\phi_\sigma) \int_{I^d} \left( \prod_{j=1,\, j \neq i}^d \sum_{k_j=0}^n \phi_\sigma(nx_j - k_j) \right) d\rho(x) \nonumber \\
&\leq \frac{1}{(\phi_\sigma(1))^d} M_p(\phi_\sigma) \frac{\rho(I^d)}{n^p} = \frac{C}{n^p}. \label{mI2esti}
\end{align}

Similarly, we estimate $\|I_1\|_{\LP}$ as follows. Again using Jensen's inequality, we obtain
\begin{align}
\|I_1\|^p_{\LP} 
&\leq \int_{I^d} \left| \sum_{k_1=0}^n \ldots \sum_{k_d=0}^n \left( \frac{\int_{I^d} \left|\pi_i(\mathbf{t}) - \frac{k_i}{n}\right| \prod_{j=1}^d \phi_\sigma(nt_j - k_j) \, d\rho(t)}{\int_{I^d} \prod_{j=1}^d \phi_\sigma(nt_j - k_j) \, d\rho(t)} \right) \right. \nonumber \\
&\quad \times \left. \frac{\prod_{j=1}^d \phi_\sigma(nx_j - k_j)}{\sum_{k_1=0}^n \ldots \sum_{k_d=0}^n \prod_{j=1}^d \phi_\sigma(nx_j - k_j)} \right|^p d\rho(x)\nonumber \\
&\leq \int_{I^d} \sum_{k_1=0}^n \ldots \sum_{k_d=0}^n \left( \frac{\int_{I^d} \left|\pi_i(\mathbf{t}) - \frac{k_i}{n}\right| \prod_{j=1}^d \phi_\sigma(nt_j - k_j) \, d\rho(t)}{\int_{I^d} \prod_{j=1}^d \phi_\sigma(nt_j - k_j) \, d\rho(t)} \right)^p \nonumber\\
&\quad \times \frac{\prod_{j=1}^d \phi_\sigma(nx_j - k_j)}{\sum_{k_1=0}^n \ldots \sum_{k_d=0}^n \prod_{j=1}^d \phi_\sigma(nx_j - k_j)}  d\rho(x) \nonumber
\end{align}

\begin{align}
&\leq \int_{I^d}\sum_{k_1=0}^n \ldots \sum_{k_d=0}^n \left( \frac{\int_{I^d} \left|\pi_i(\mathbf{t}) - \frac{k_i}{n}\right|^p \prod_{j=1}^d \phi_\sigma(nt_j - k_j) \, d\rho(t)}{\int_{I^d} \prod_{j=1}^d \phi_\sigma(nt_j - k_j) \, d\rho(t)} \right) \nonumber \\
&\quad \times \frac{\prod_{j=1}^d \phi_\sigma(nx_j - k_j)}{\sum_{k_1=0}^n \ldots \sum_{k_d=0}^n \prod_{j=1}^d \phi_\sigma(nx_j - k_j)}   d\rho(x) \nonumber \\
&\leq \sum_{k_1=0}^n \ldots \sum_{k_d=0}^n  \left( \frac{\int_{I^d} \left|\pi_i(\mathbf{t}) - \frac{k_i}{n}\right|^p \prod_{j=1}^d \phi_\sigma(nt_j - k_j) \, d\rho(t)}{\sum_{k_1=0}^n \ldots \sum_{k_d=0}^n \prod_{j=1}^d \phi_\sigma(nx_j - k_j)} \right) \nonumber\\
&\leq \frac{1}{(\phi_\sigma(1))^d} \sum_{k_1=0}^n \ldots \sum_{k_d=0}^n  \left( \int_{I^d} \left|\pi_i(\mathbf{t}) - \frac{k_i}{n}\right|^p \prod_{j=1}^d \phi_\sigma(nt_j - k_j)\, d\rho(t)\right) \nonumber\\
&\leq \frac{1}{(\phi_\sigma(1))^d} \int_{I^d} \sum_{k_1=0}^n \ldots \sum_{k_d=0}^n  \left( \sum_{k_i=0}^n  \left|\pi_i(\mathbf{t}) - \frac{k_i}{n}\right|^p \phi_\sigma(nx_i-k_i)\right)\nonumber\\
&\quad \times \left( \prod_{j=1, j\neq i }^d \phi_\sigma(nt_j - k_j) \right) d\rho(t) \nonumber \\
&\leq \frac{M_p(\phi_\sigma)}{n^p (\phi_\sigma(1))^d} \int_{I^d} \sum_{k_1=0}^n \ldots \sum_{k_d=0}^n \left( \prod_{j=1, j\neq i }^d \phi_\sigma(nt_j - k_j) \right)d\rho(t) \nonumber \\
&=\frac{M_p(\phi_\sigma)}{(\phi_\sigma(1))^d} \frac{1}{n^p}\int_{I^d} \left( \prod_{j=1,\, j\neq i}^d \sum_{k_j=0}^n \phi_\sigma(nt_j-k_j)\right) d\rho(t) \nonumber \\
&\leq \frac{M_p(\phi_\sigma)}{(\phi_\sigma(1))^d} \frac{\rho(I^d)}{n^p} = \frac{C}{n^p}. \label{mI1esti}
\end{align}
Combining (\ref{mgesti})-(\ref{mI1esti}), we obtain \begin{align}
     \|S^{\rho}_ng-g\|_{\LP}  \leq C\,d\,\|g\|_{1,\infty} \frac{1}{n}. \label{mgfinalesti}
\end{align}
Using (\ref{mgfinalesti}) in (\ref{mKesti}), and taking the infimum over $g\in W^{1,\infty}(I^d,\rho)$, we get the desired result.
\end{proof}

Now, we prove the $\LP$ convergence of $S^{\rho}_n.$
 \begin{corollary} \label{moplpcon}
    Let $1\leq p<\infty.$ For $f\in \LP,$ we have
$$\lim_{n\to\infty}\|S^{\rho}_nf-f\|_{\LP}=0.$$
\end{corollary}
\begin{proof} By Theorem \ref{munifo}, it is easy to see that for all $g\in C(I^d)$, we have 
\begin{equation} \lim_{n\to\infty}\|S^{\rho}_ng-g\|_{\LP}=0.  \label{moplpconti}\end{equation}
Applying the triangle inequality, and Lemma \ref{oplpboubd}, we obtain 
   \begin{eqnarray*} 
    \|S^{\rho}_nf-f\|_{\LP} 
    &\leq&  \|S^{\rho}_nf-S^{\rho}_ng\|_{\LP}+ \|S^{\rho}_ng-g\|_{\LP} + \|f-g\|_{\LP}\\
    &\leq& 2\|f-g\|_{\LP} + \|S^{\rho}_ng-g\|_{\LP}.
   \end{eqnarray*}
   Since $C(I^d)$ is dense $\LP,$ so for $f\in \LP$, there exists a function $g\in C(I^d)$ such that 
  \begin{equation} \|f-g\|_{\LP}<\epsilon. \label{mcontdense}\end{equation}
 On combining (\ref{moplpconti})-(\ref{mcontdense}), we get the desired result.
\end{proof}

\section{Examples of activation functions and Implementation Results}
\subsection{Examples of activation functions}
\label{example}
In this section, we take some particular activation functions and verify the assumption of the theorems for one and multidimensional NN operators. As a first example, we consider the following logistic function: $$\sigma(x)=\dfrac{1}{1+e^{-x}},\, x\in \R.$$

\begin{example}
    
\label{elogis}
    Let $\sigma(x)=\dfrac{1}{1+e^{-x}}.$ Using (\ref{kernel}), we can write  
    $$\phi_\sigma(x)=\dfrac{1}{2}\left(\dfrac{1}{1+e^{-(x+1)}}-\dfrac{1}{1+e^{-(x-1)}}\right).$$ 
    
    Given that $\phi_\sigma(nt-k)$ is a positive function that increases until $t=\dfrac{k}{n}$, and then decreases symmetrically about the point $t=\dfrac{k}{n}$, we can assume, without loss of generality,  that $\frac{k}{n}\leq\frac{1}{2}.$ Hence, we have the following: 
\[\dfrac{\max\{\phi_\sigma(nt-k): t\in [0,1] \setminus(\frac{k}{n}-\delta, \frac{k}{n}+\delta) \}}{\min\{\phi_\sigma(nt-k): t\in[\frac{k}{n}, \frac{k}{n}+\delta^2]\}}= \dfrac{\dfrac{1}{1+e^{-(n\delta+1)}}-\dfrac{1}{1+e^{-(n\delta-1)}}} {\dfrac{1}{1+e^{-(n\delta^2+1)}}-\dfrac{1}{1+e^{-(n\delta^2-1)}}}.\]
 Simplifying the RHS of the above expression, we obtain
\begin{eqnarray*}
\text{RHS}&=:& \dfrac{\dfrac{e^{-(n\delta-1)}-e^{-(n\delta+1)}}{1+e^{-(2n\delta)}+e^{-(n\delta-1)}+e^{-(n\delta+1)}}}{\dfrac{e^{-(n\delta^2-1)}-e^{-(n\delta^2+1)}}{1+e^{-(2n\delta^2)}+e^{-(n\delta^2-1)}+e^{-(n\delta^2+1)}}} \\
&=& \left(\dfrac{e^{-(n\delta-1)}-e^{-(n\delta+1)}}{e^{-(n\delta^2-1)}-e^{-(n\delta^2+1)}}\right) \times  \left(\dfrac{1+e^{-2n\delta^2}+e^{-(n\delta^2-1)}+e^{-(n\delta^2+1)}}{1+e^{-2n\delta}+e^{-(n\delta-1)}+e^{-(n\delta+1)}}\right) \\
&=& \dfrac{e^{-(n\delta)}}{e^{-(n\delta^2)}}\times \left(\dfrac{1+e^{-2n\delta^2}+e^{-(n\delta^2-1)}+e^{-(n\delta^2+1)}}{1+e^{-2n\delta}+e^{-(n\delta-1)}+e^{-(n\delta+1)}}\right) \\
&=& e^{n(\delta^2-\delta)} \times \left(\dfrac{1+e^{-2n\delta^2}+e^{-(n\delta^2-1)}+e^{-(n\delta^2+1)}}{1+e^{-2n\delta}+e^{-(n\delta-1)}+e^{-(n\delta+1)}}\right).
\end{eqnarray*}
Since $0<\delta<1$, we have $\delta^2-\delta<0.$ Therefore, we get 
\[
    \lim_{n\to\infty}\dfrac{\max\{\phi_\sigma(nt-k): t\in  [0,1]\setminus (\frac{k}{n}-\delta, \frac{k}{n}+\delta) \}}{\min\{\phi_\sigma(nt-k): t\in[\frac{k}{n}, \frac{k}{n}+\delta^2]\}}=0,\] where $0<\delta<1.$ This verifies the condition of the Theorem \ref{uniapp}. Further, it is easy to see that $M_p(\phi_\sigma)$ is finite for $1\leq p<\infty$ and hence conditions on Theorem \ref{errorest} is verified.
\end{example}

As a second example, we consider the tangent hyperbolic function 
$\sigma(x)=\dfrac{e^x - e^{-x}}{e^x + e^{-x}}.$ 

\begin{example} \label{etanh}
Let $\sigma(x)=\dfrac{e^x - e^{-x}}{e^x + e^{-x}}.$ Again, using (\ref{kernel}), we can write $$\phi_\sigma(x)=\dfrac{1}{2}\left(\dfrac{e^{x+1}-e^{-(x+1)}}{e^{(x+1)}+e^{-(x+1)}}-\dfrac{e^{(x-1)}-e^{-((x-1))}}{e^{(x-1)}+e^{-(x-1)}}\right).$$ 
We note that $\phi_\sigma(nt-k)$ is a positive function that increases until the point $\dfrac{k}{n}$, and decreases symmetrically about the point $\dfrac{k}{n}.$ We can assume, without loss of generality,  that $\frac{k}{n}\leq\frac{1}{2}.$ Thus, we have
\begin{eqnarray*}
 \dfrac{\max\{\phi_\sigma(nt-k): t\in [0,1] \setminus(\frac{k}{n}-\delta, \frac{k}{n}+\delta) \}}{\min\{\phi_\sigma(nt-k): t\in[\frac{k}{n}, \frac{k}{n}+\delta^2]\}} = \dfrac{\dfrac{e^{n\delta+1}-e^{-(n\delta+1)}}{e^{n\delta+1}+e^{-(n\delta+1)}}-\dfrac{e^{n\delta-1}-e^{-(n\delta-1)}}{e^{n\delta-1}+e^{-(n\delta-1)}}} {\dfrac{e^{n\delta^2+1}-e^{-(n\delta^2+1)}}{e^{n\delta^2+1}+e^{-(n\delta^2+1)}}-\dfrac{e^{n\delta^2-1}-e^{-(n\delta^2-1)}}{e^{n\delta^2-1}+e^{-(n\delta^2-1)}}}
 \end{eqnarray*}
Simplifying the above expression, we have 
\begin{eqnarray*}
\text{RHS} &=& \dfrac{e^{2n\delta} + e^2 - e^{-2} - e^{-2n\delta} - e^{-2n\delta} - e^{-2} + e^2 + e^{-2n\delta}}{e^{2n\delta} + e^2 + e^{-2} + e^{-2n\delta}} \\
& \times & \frac{e^{2n\delta^2} + e^2 + e^{-2} + e^{-2n\delta^2}}{e^{2n\delta^2} + e^2 - e^{-2} - e^{-2n\delta^2} - e^{-2n\delta^2} - e^{-2} + e^2 + e^{-2n\delta^2}} \\
&=& \dfrac{e^2 + e^{-2} + e^{2n\delta^2} + e^{-2n\delta^2}}{e^2 + e^{-2} + e^{2n\delta} + e^{-2n\delta}} \\
&= & \frac{e^{2n\delta^2}}{e^{2n\delta}} \times \left( \dfrac{e^{2 - 2n\delta^2} + e^{-2 - 2n\delta^2} + 1 + e^{-4n\delta^2}}{e^{2 - 2n\delta} + e^{-2 - 2n\delta} + 1 + e^{-4n\delta}} \right).
\end{eqnarray*}
Since $0<\delta<1$, we get

$$\lim_{n\to\infty}\dfrac{\max\{\phi_\sigma(nt-k): t\in  [0,1]\setminus (\frac{k}{n}-\delta, \frac{k}{n}+\delta) \}}{\min\{\phi_\sigma(nt-k): t\in[\frac{k}{n}, \frac{k}{n}+\delta^2]\}}=0.$$ This verifies the condition of the Theorem \ref{uniapp}. It is easy to see that $M_p(\phi_\sigma)$ is finite for $1\leq p<\infty$ and hence conditions on Theorem \ref{errorest} are also verified.
\end{example}

\subsection{Implementation Results}
In this section, we show the approximation of continuous and integrable functions by neural network operators with respect to the Lebesgue and Jacobi measures on $[0,1]\times[0,1].$

\subsubsection{Lebesgue Measure} First, we take $\rho$ as Lebesgue measure. Then, the operator $S_n^{\rho}$ takes the following form on $[0,1]\times[0,1]:$ \begin{equation*} S^\rho_nf(\X)=\displaystyle{\dfrac{\sum_{k_1=0}^n\sum_{k_2=0}^nc_{n,\beta}\,\Phi_\sigma(nx_1-k_1,nx_2-k_2)}{\sum_{k_1=0}^n\sum_{k_2=0}^n\Phi_\sigma(nx_1-k_1,nx_2-k_2)}}, \end{equation*} where the coefficient $c_{n,\beta}$ is given by 
$$c_{n,\beta}:=\dfrac{\int_0^1\int_0^1f(t_1,t_2)\,\Phi_\sigma(nt_1-k_1,nt_2-k_2)\,dt_1dt_2}{\int_0^1\int_0^1\Phi_\sigma(nt_1-k_1,nt_2-k_2,)\,dt_1dt_2}.$$

Now, we approximate the following continuous function: $$f(x, y) = \sin(\pi x) \cdot \cos(\pi y) + 0.5 \, x^{2} y,\quad x,y\in[0,1]\times[0,1]$$ 
by the NN operator $S^\rho_n$ with hyperbolic tangent and logistic activation function for $n=40$. The function and its approximations are given in \autoref{conti}, \autoref{tanhcont} and \autoref{logiscont}. The sup norm and the $L^1$-norm error with respect to different values of $n$ are provided in \autoref{tablogis} and \autoref{tabtanh}.

\begin{figure}[htbp]
\centering
{\includegraphics[width=0.5\textwidth]{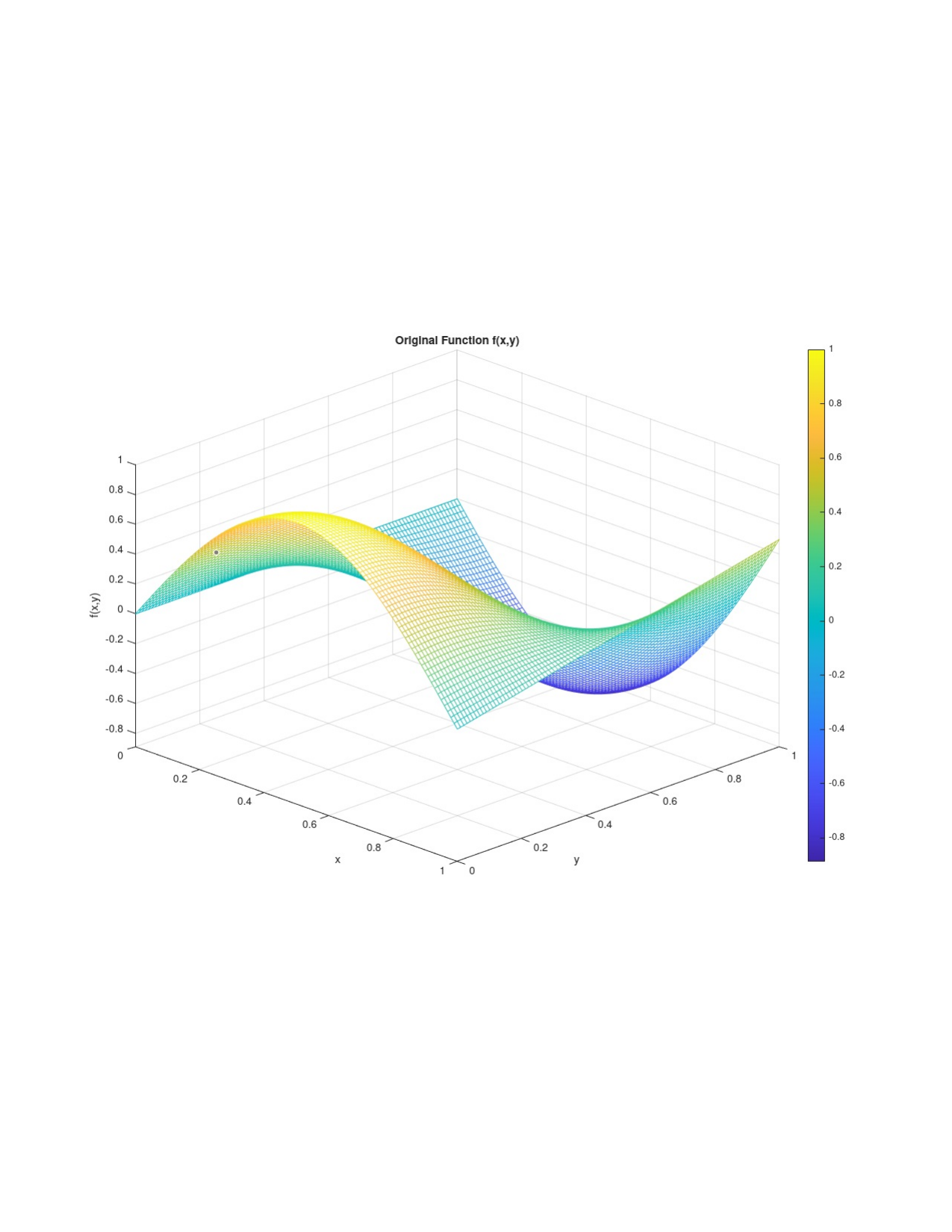} }
\caption{The original function $f(x,y)$.}
\label{conti}
\end{figure}

\begin{figure}[htbp]
\centering
{\includegraphics[width=0.5\textwidth]{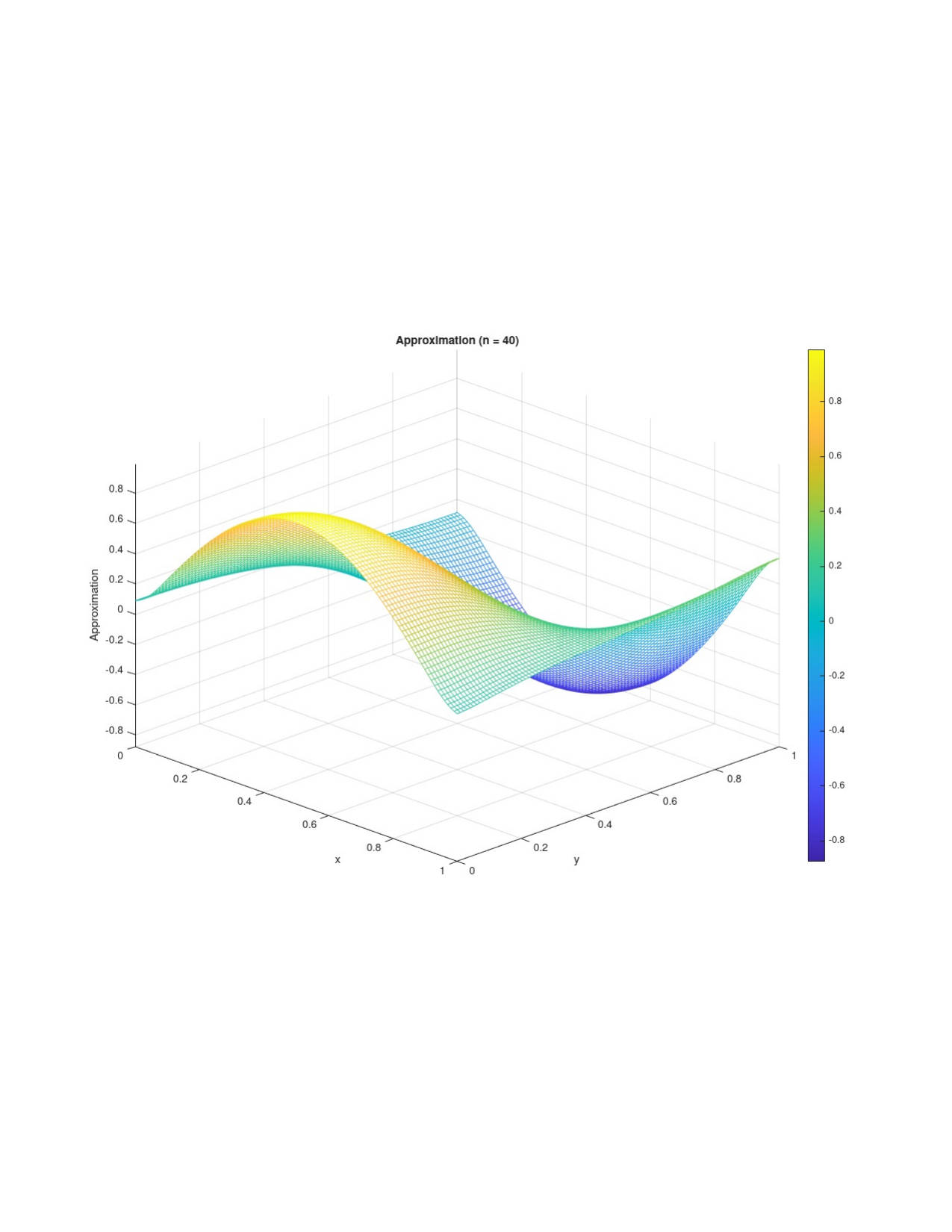} }
\caption{Approximation of $f(x,y)$ by $S^\rho_{40}f(x,y)$ with tanh activation function.}
\label{tanhcont}
\end{figure}

\begin{figure}[htbp]
\centering
{\includegraphics[width=0.5\textwidth]{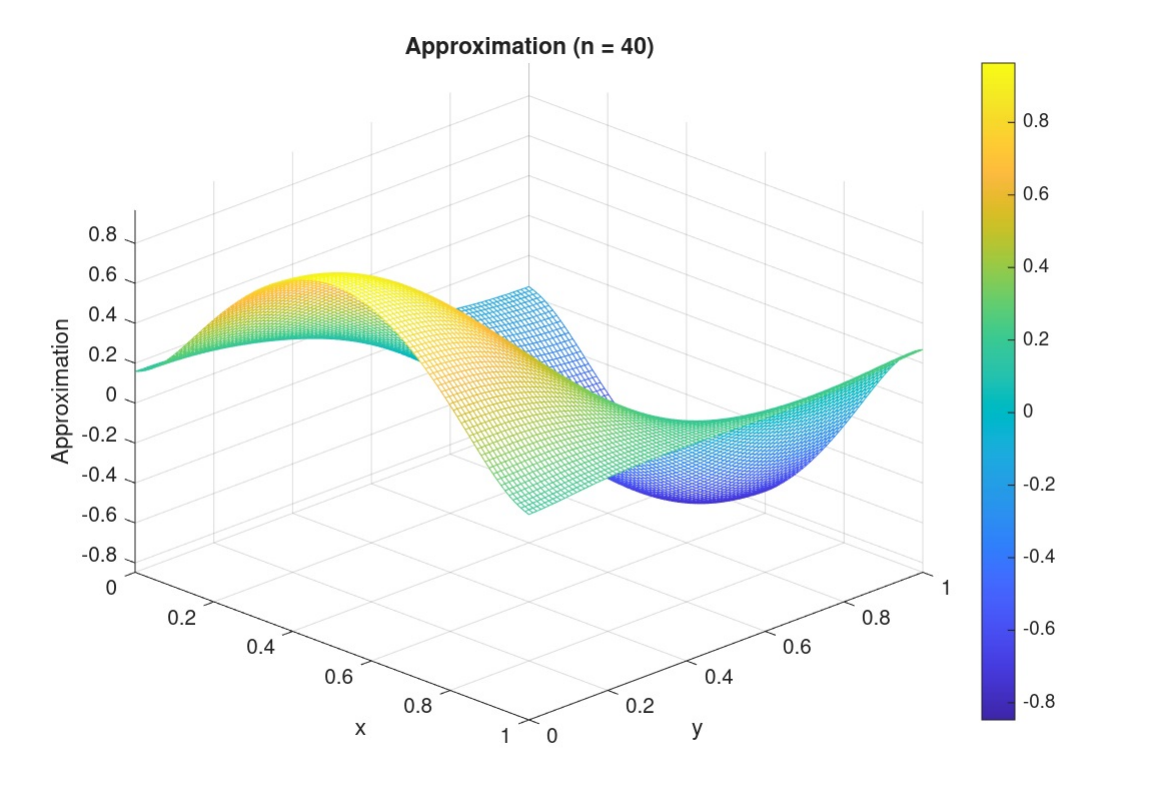} }
\caption{Approximation of $f(x,y)$ by $S^\rho_{40}f(x,y)$ with logistic activation function.}
\label{logiscont}
\end{figure}

\begin{table}[H]
\centering
\begin{tabular}{|c|c|c|}
\hline
\textbf{$n$} & \textbf{$\|S^\rho_nf-f\|_\infty$} & \textbf{$\|S_n^\rho f-f\|_{L^1([0,1]\times[0,1])}$} \\
\hline
10 & 0.6140847 & 0.18006860 \\
\hline
20 & 0.4217103 & 0.07929577 \\
\hline
40 & 0.2318002 & 0.02551001 \\
\hline
60 & 0.1577081 & 0.01215602 \\
\hline
80 & 0.1192026 & 0.00706165 \\
\hline
100 & 0.09571101 & 0.00460772 \\
\hline
120 & 0.07990336 & 0.00324348 \\
\hline
140 & 0.06854284 & 0.00240865 \\
\hline
160 & 0.05998377 & 0.00186214 \\
\hline
180 & 0.05330224 & 0.00148396 \\
\hline
\end{tabular}
\caption{Sup norm and $L^1$-norm error for varying values of $n$ with logistic activation function.}
\label{tablogis}
\end{table}

\begin{table}[H]
\centering
\begin{tabular}{|c| c| c|}
\hline
 \textbf{$n$} & \textbf{$\|S^\rho_nf-f\|_\infty$} & \textbf{$\|S^\rho_nf-f\|_{L^1([0,1]\times[0,1])}$} \\
\hline
10 & 0.4537 & 0.0936 \\
\hline
20 & 0.2536 & 0.0312 \\
\hline
40 & 0.1310 & 0.0088 \\
\hline
60 & 0.0879 & 0.0040 \\
\hline
80 & 0.0660 & 0.0023 \\
\hline
100 & 0.05277893 & 0.00150571 \\
\hline
120 & 0.04389374 & 0.00106091 \\
\hline
140 & 0.03751111 & 0.00078993 \\
\hline
160 & 0.03269716 & 0.00061305 \\
\hline
180 & 0.02893381 & 0.00049172 \\
\hline
\end{tabular}
\caption{Sup norm and $L^1$-norm error for varying values of $n$ with tanh activation function.}
\label{tabtanh}
\end{table}

Now we take the following integrable function on $[0,1]\times[0,1].$
$$
f(x, y) =
\begin{cases}
1 - 2xy, & \text{if } x < 0.4 \text{ and } y < 0.4, \\
0.3, & \text{if } 0.4 \leq x < 0.7 \text{ and } 0.4 \leq y < 0.7, \\
\sin(4\pi x) \cos(4\pi y), & \text{if } x \geq 0.7 \text{ or } y \geq 0.7.
\end{cases}
$$ 
The function and its approximation by the NN operator $S^\rho_n$ with hyperbolic tangent and logistic activation function for $n=40$ are given in \autoref{org}, \autoref{tanh40} and \autoref{logis40}. The $L^1$-norm error with respect to different values of $n$ are provided in \autoref{intt1} and \autoref{intt2}.
     
 \begin{figure}[htbp]
\centering
{\includegraphics[width=0.5\textwidth]{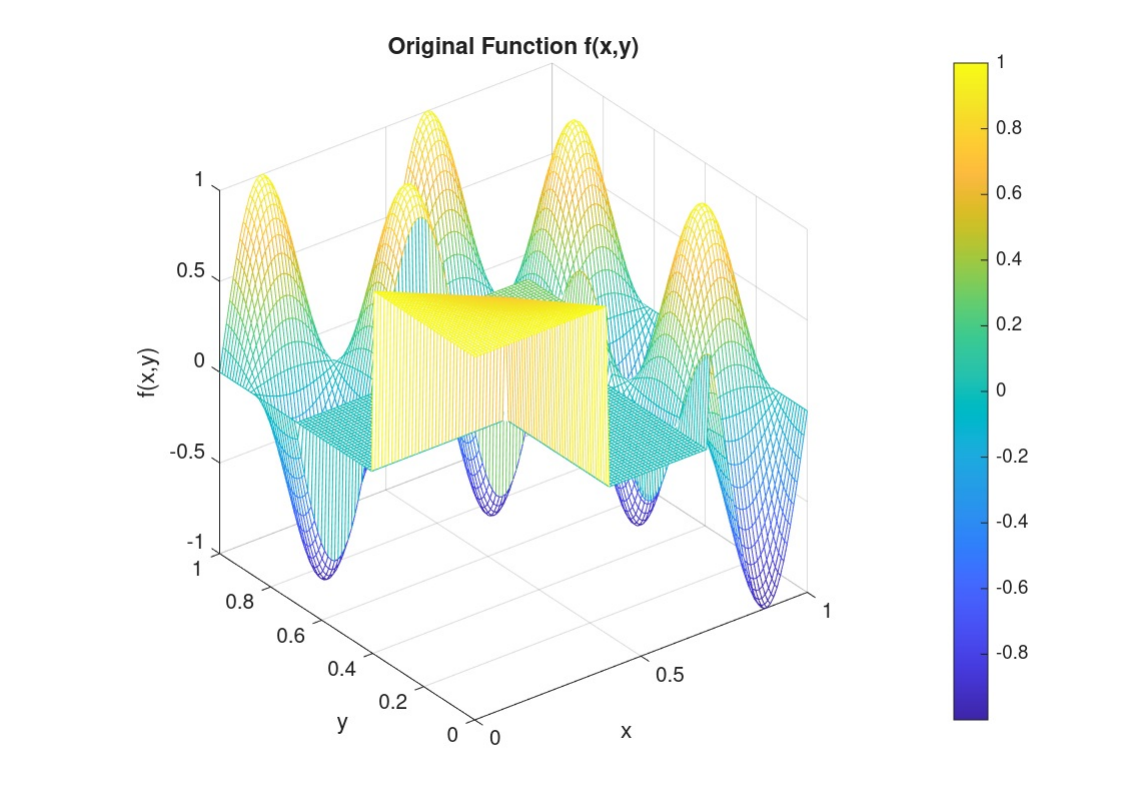} }
\caption{The original function $f(x,y)$.}
\label{org}
\end{figure}

 \begin{figure}[htbp]
\centering
{\includegraphics[width=0.5\textwidth]{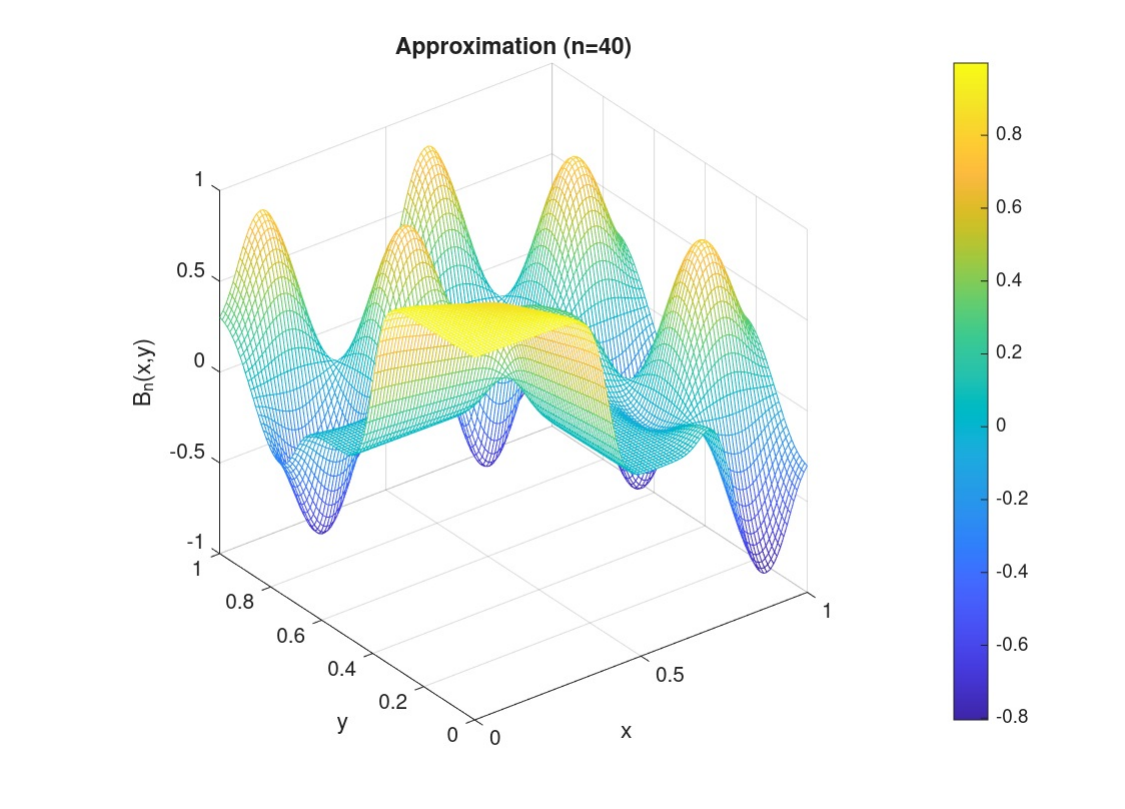} }
\caption{Approximation of $f(x,y)$ by $S^\rho_{40}f(x,y)$ with tanh activation function.}
\label{tanh40}
\end{figure}

 \begin{figure}[htbp]
\centering
{\includegraphics[width=0.5\textwidth]{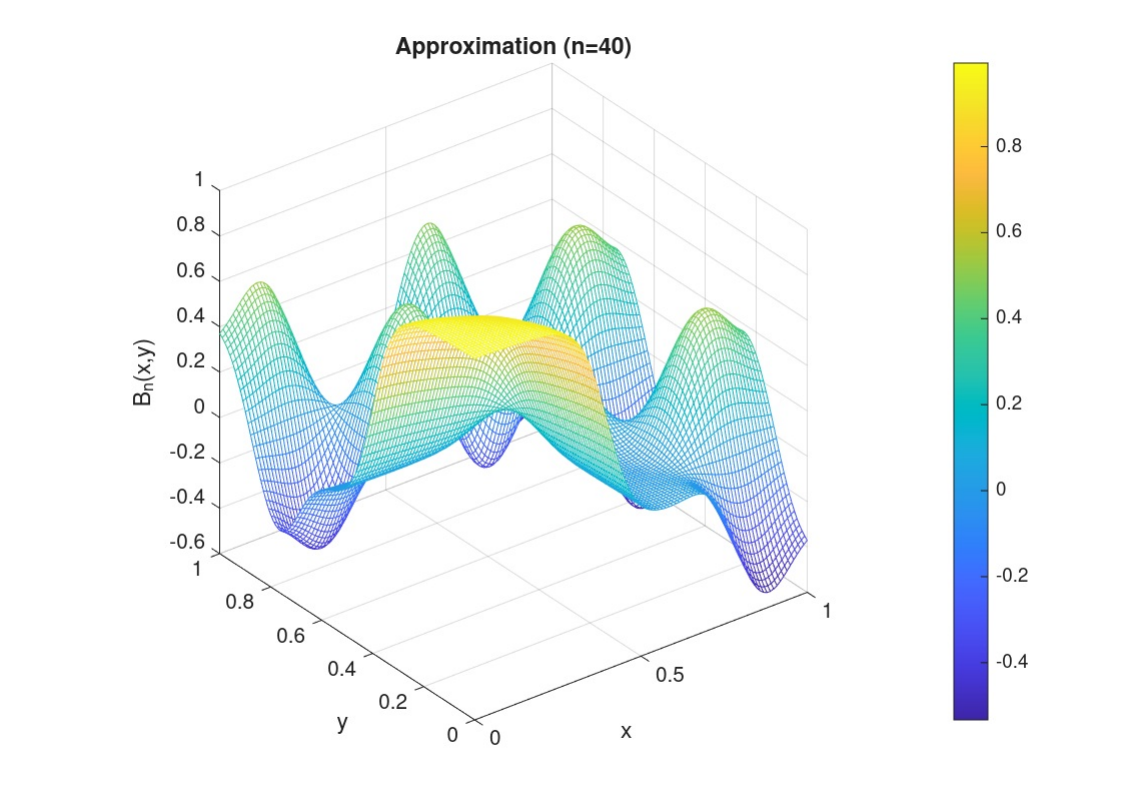} }
\caption{Approximation of $f(x,y)$ by $S^\rho_{40}f(x,y)$ with logistic activation function.}
\label{logis40}
\end{figure}

\begin{table}[H]
\centering
\begin{tabular}{|c|c|c|}
\hline
\textbf{$n$} & \textbf{$\|S^\rho_nf-f\|_{L^1([0,1]\times[0,1])}$} \\ 
\hline
10   & 0.34143 \\
\hline
20   & 0.26699 \\
\hline
40    & 0.15767 \\
\hline
60    & 0.10089 \\
\hline
80    & 0.07069 \\
\hline
100 & 0.00460772 \\
\hline
120 & 0.00324348 \\
\hline
140 & 0.00240865 \\
\hline
160 & 0.00186214 \\
\hline
180 & 0.00148396 \\
\hline
\end{tabular}
\caption{$L^1$ norm error for varying values of $n$ with logistic activation function.}
\label{intt1}
\end{table}

\begin{table}[H]
\centering
\begin{tabular}{|c|c|}
\hline
\textbf{$n$} & \textbf{$\|S^\rho_nf-f\|_{L^1([0,1]\times[0,1])}$} \\
\hline
10  & 0.28442 \\
\hline
20  & 0.17863 \\
\hline
40  & 0.08282 \\
\hline
60  & 0.04929 \\
\hline
80  & 0.03390 \\
\hline
100 & 0.02538742 \\
\hline
120 & 0.02004754 \\
\hline
140 & 0.01640804 \\
\hline
160 & 0.01377641 \\
\hline
180 & 0.01179791 \\
\hline
\end{tabular}
\caption{$L^1$ norm error for varying values of $n$ with tanh activation function.}
\label{intt2}
\end{table}

\subsubsection{Jacobi Measure} 
Now, we consider the following Jacobi weight measure: $$w(t_1,t_2)=t_{1}^{\alpha}(1-t_{1})^{\beta} \, t_{2}^{\gamma}(1-t_{2})^{\delta},$$ where $\alpha=\beta= \gamma= \delta=0.5$ and $t_1, t_2\in [0,1].$ For the Jacobi weight, the operator $S_n^{\rho}$ takes the following form in $[0,1]\times [0,1]$:
\begin{equation}
    S^w_nf(\X)=\displaystyle{\dfrac{\sum_{k_1=0}^n\sum_{k_2=0}^nc_{n,\beta}\,\Phi_\sigma(nx_1-k_1,nx_2-k_2)}{\sum_{k_1=0}^n\sum_{k_2=0}^n\Phi_\sigma(nx_1-k_1,nx_2-k_2)}},\end{equation} where the coefficient $c_{n,\beta}$ is given by 
$$c_{n,\beta}:=\dfrac{\int_0^1\int_0^1f(t_1,t_2)\,\Phi_\sigma(nt_1-k_1,nt_2-k_2)\,w(t_1,t_2)\,dt_1dt_2 
}{\int_0^1\int_0^1\Phi_\sigma(nt_1-k_1,nt_2-k_2)\,w(t_1,t_2)\,dt_1dt_2 }.$$ 

Now we approximate the following integrable function by NN operators $S_n^w$:
$$ f(x, y) =
\begin{cases}
1 - 2xy, & \text{if } x < 0.4 \text{ and } y < 0.4, \\
0.3, & \text{if } 0.4 \leq x < 0.7 \text{ and } 0.4 \leq y < 0.7, \\
\sin(4\pi x) \cos(4\pi y), & \text{if } x \geq 0.7 \text{ or } y \geq 0.7.
\end{cases} $$ 

The function and its approximation by the NN operator $S^w_n$ with hyperbolic tangent and logistic activation function for $n=40$ are given in \autoref{org2}, \autoref{jacoblog} and \autoref{jacobitanh}. The $L^1$-norm error with respect to different values of $n$ is provided in \autoref{t3} and \autoref{t4}.

\begin{figure}[htbp]
\centering
{\includegraphics[width=0.5\textwidth]{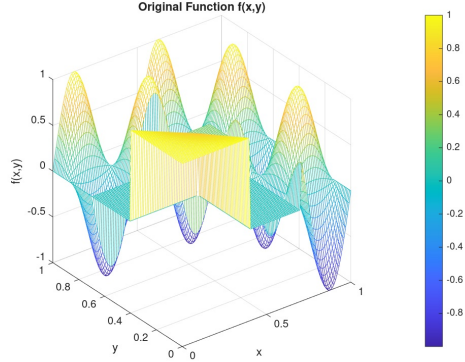} }
\caption{The original function $f(x,y)$.}
\label{org2}
\end{figure}

\begin{figure}[htbp]
\centering
{\includegraphics[width=0.5\textwidth]{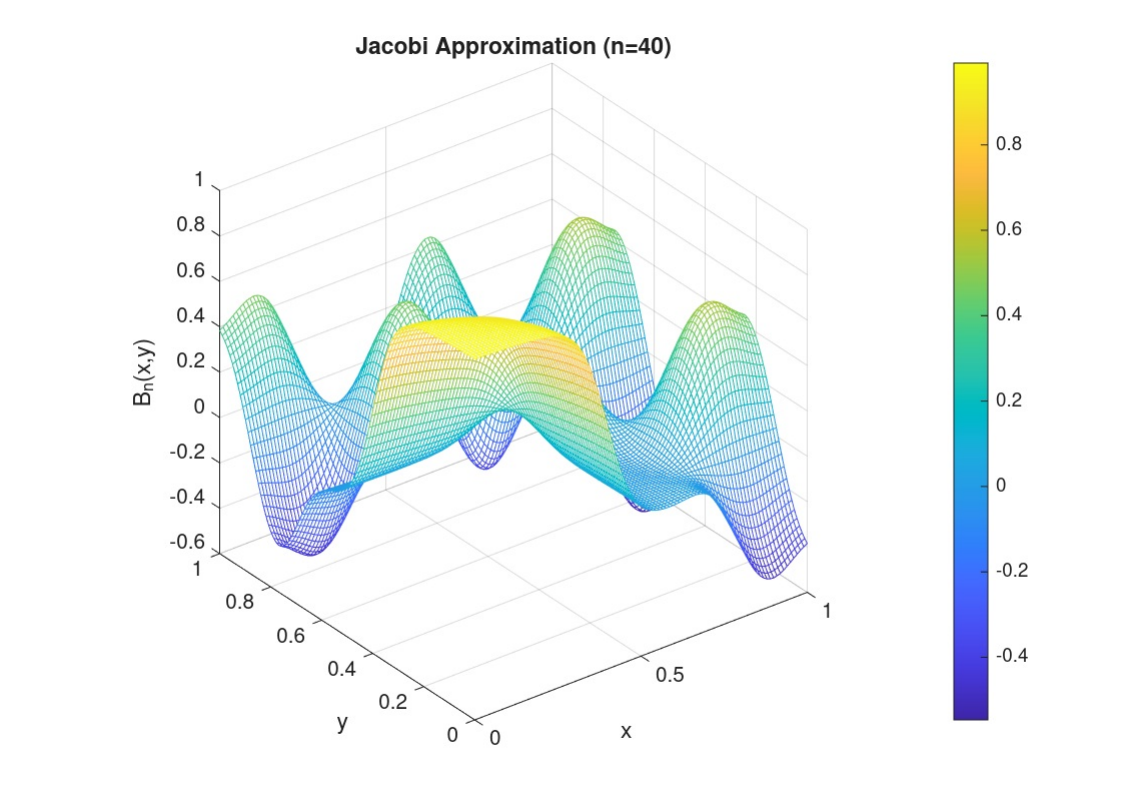} }
\caption{Approximation of $f(x,y)$ by $S^w_{40}f(x,y)$ with logistic activation function.}
\label{jacoblog}
\end{figure}

\begin{figure}[htbp]
\centering
{\includegraphics[width=0.5\textwidth]{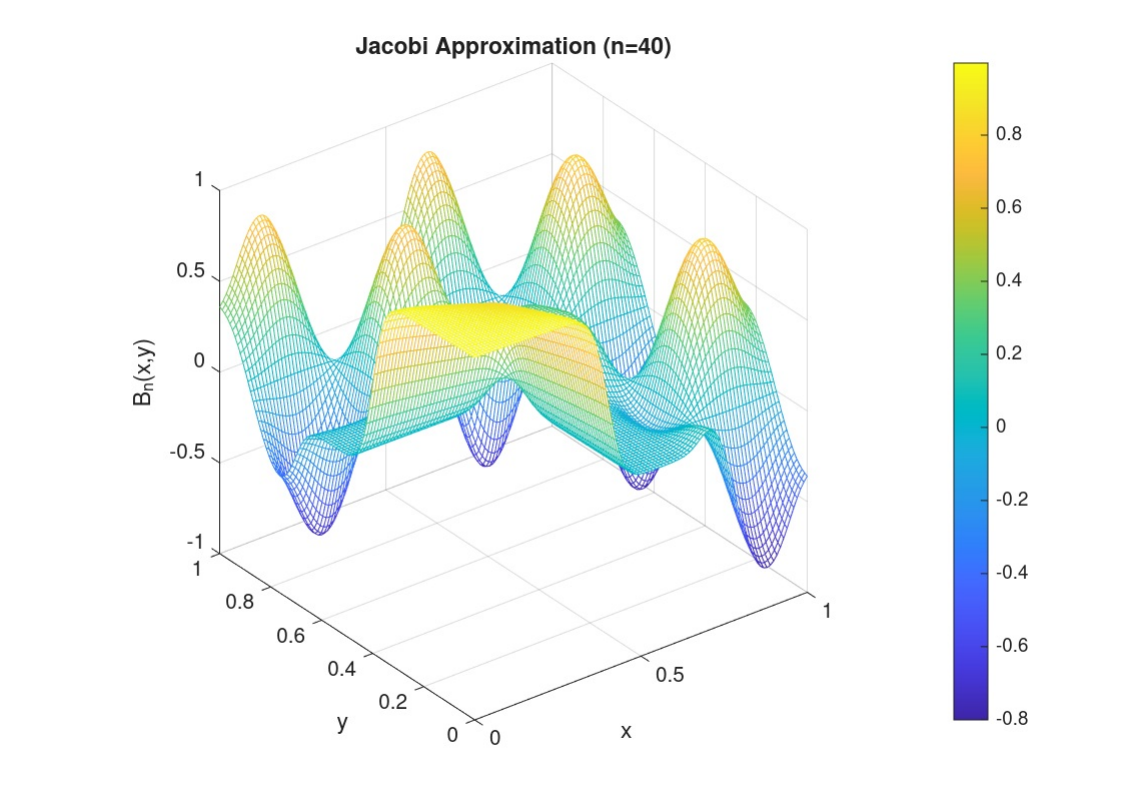} }
\caption{Approximation of $f(x,y)$ by $S^w_{40}f(x,y)$ with tanh activation function.}
\label{jacobitanh}
\end{figure}

\begin{table}[H]
\centering
\begin{tabular}{|c| c|}
\hline
$n$ & $\|S^w_nf-f\|_{L^1([0,1]\times[0,1])}$ \\
\hline
10 & 0.050147 \\
\hline
20 & 0.040254 \\
\hline
40 & 0.024157 \\
\hline
60 & 0.015721 \\
\hline
80 & 0.011188 \\
\hline
100 & 0.008522 \\
\hline
120 & 0.006810 \\
\hline
140 & 0.005629 \\
\hline
160 & 0.004773 \\
\hline
180 & 0.004126 \\
\hline
\end{tabular}
\caption{$L^1$ norm errors for different values of $n$ with logistic activation function.}
\label{t3}
\end{table}

\begin{table}[H]
\centering
\begin{tabular}{|c| c|}
\hline
$n$ &  $\|S^w_nf-f\|_{L^1([0,1]\times[0,1])}$ \\
\hline
10 & 0.042775 \\
\hline
20 & 0.027303 \\
\hline
40 & 0.013046 \\
\hline
60 & 0.007969 \\
\hline
80 & 0.005590 \\
\hline
100 & 0.004245 \\
\hline
120 & 0.003391 \\
\hline
140 & 0.002803 \\
\hline
160 & 0.002371 \\
\hline
180 & 0.002040 \\
\hline
\end{tabular}
\caption{$L^1$ norm errors for different values of $n$ with tanh activation function.}
\label{t4}
\end{table}

\section{Final Remarks and Conclusions}
\subsection{Final Remarks} We have the following concluding remarks.
\begin{itemize}
    \item In this paper, we have considered the unit hypercube $[0,1]^d\subset \R^d$. It is easy to see that the similar results are also applicable to more general sets $\Omega \subset \R^d$, where $\Omega:=\displaystyle\prod_{i=1}^{d}[a_i, b_i].$ Hence, it is not necessary to repeat the details.\\

   \item We have verified the hypothesis of the theorems for the logistic and hyperbolic tangent activation functions. It would be interesting to look for other sigmoidal functions that satisfy the hypothesis of the theorem. \\


    \item It would be insightful to study the operator (\ref{mopdef}) for specific weighted measure and see how the choice of weight influences the convergence properties of the operator.
    
   \end{itemize}
    
\subsection{Conclusions}
The approximation of functions belonging to the $L^p(I^d,\rho),$ where $1\leq p<\infty$ is associated with an arbitrary measure $\rho$ defined on a hypercube satisfying a certain support condition by the NN operators is investigated. 
Specifically, the uniform approximation of continuous functions defined on a hypercube by these operators is proved. Further, the $L^p(I^d,\rho)$ approximation and its error rate in terms of $\mathcal{K}-$functional is obtained. 
Towards the end, the hypothesis of the theorems are verified for the logistic and hyperbolic tangent activation functions. The approximation of particular continuous and integrable functions by NN operators with respect to the Lebesgue and Jacobi measures defined on $[0,1]\times[0,1]$ with these activation functions has been shown.

\vspace{3mm}
\subsubsection*{\bf Data availability:}
\noindent No data was used for the research described in the article.

 \subsection*{Acknowledgements}

\noindent
 Nitin Bartwal is thankful to the HTRA Fellowship, IIT Madras for the financial support to carry out his research work.
\noindent 
A. Sathish Kumar is supported by ANRF, DST-SERB, India Research Grant: EEQ/2023/000257 for financial support.

\end{document}